\documentclass[11pt]{article}
\usepackage{amsmath,amssymb,amsthm}
\usepackage[margin=1in]{geometry}
\usepackage{enumitem}
\usepackage{tikz}

\theoremstyle{definition}
\newtheorem{definition}{Definition}
\newtheorem{problem}{Problem}
\theoremstyle{plain}
\newtheorem{lemma}{Lemma}
\newtheorem{proposition}{Proposition}
\newtheorem{theorem}{Theorem}
\newtheorem{corollary}{Corollary}
\theoremstyle{remark}
\newtheorem{remark}{Remark}

\newcommand{\Z}{\mathbb{Z}}
\newcommand{\popc}{\operatorname{pop}}
\newcommand{\col}{\operatorname{col}}

\definecolor{tgtok}{HTML}{2A2F37}
\definecolor{tgrows}{HTML}{2F6BD8}
\definecolor{tgcols}{HTML}{C0761A}
\definecolor{tggrid}{HTML}{B9C0CB}
\newcommand{\tglabels}{%
  \foreach \c in {0,1,2,3}{\node[font=\tiny,text=gray] at (\c+0.5,-4.34){c\c};}%
  \foreach \r in {0,1,2,3}{\node[font=\tiny,text=gray] at (-0.34,-\r-0.5){r\r};}}
\newcommand{\tgboard}[1]{%
  \begin{tikzpicture}[scale=0.62,baseline=(current bounding box.center)]
    \draw[tggrid] (0,0) grid (4,-4);\tglabels
    \foreach \r/\c in {#1}{\fill[tgtok] (\c+0.5,-\r-0.5) circle (0.30);}
  \end{tikzpicture}}

\usepackage[hidelinks]{hyperref}

\title{Searching for Primes: A Neural AlphaZero Approach to a Factoring Game}
\author{Marcel Crasmaru}
\date{\today}

\begin{document}
\maketitle

\begin{abstract}
We study a one-player token game on an $N\times N$ board where tokens slide along diagonals
or duplicate onto neighbouring ones to form a combinatorial rectangle $R\times S$. A conserved
integer weight $W'$ and a strict monovariant guarantee $O(N^2)$-length solutions, placing
the game in $\mathsf{NP}$. We prove that reaching a final position factors this $2N$-bit $W'$
into two $N$-bit factors $V,M < 2^{N}$ that encode the rectangle's rows and columns. Consequently, solving the game for a balanced-semiprime target is equivalent to
integer factoring. However, if the target rectangle is known, the solution reduces to two
polynomial-time steps: a forced downward chip-flow and a $0/1$-polynomial factorisation
leveraging Cohn's theorem. The game's entire difficulty is thus isolated to the initial
number-theoretic split. Supplying the popcounts of the factors as a promise preserves this
asymptotic hardness but bounds the target search space. We exploit this constrained space
using a learned policy/value network and an AlphaZero-style Monte-Carlo tree search,
empirically probing the limits of neural look-ahead on a factoring-equivalent environment.
\end{abstract}

\section{The game and the promise}

Write $[N]=\{0,1,\dots,N-1\}$ and index the board by cells $(r,c)\in[N]^2$, with
$r$ the row (top to bottom) and $c$ the column (left to right).  A cell holds at
most one token; a \emph{position} is the set $P\subseteq[N]^2$ of occupied cells.

The \emph{upper-left--to--lower-right diagonals} are the level sets of
$d(r,c)=r-c$; we call $d\in\{-(N-1),\dots,N-1\}$ the diagonal of the cell.  The
diagonal $d$ contains
\[
L_d \;=\; N-|d|
\]
cells.  Let $n_d(P)=|\{(r,c)\in P: r-c=d\}|$ be the number of tokens on diagonal
$d$, and let $\col_c(P)=\{r:(r,c)\in P\}$ be the pattern of column $c$.

\begin{definition}[Moves]\label{def:moves}
Two moves are allowed.
\begin{description}[leftmargin=2em]
\item[Slide.] Replace an occupied $(r,c)$ by any empty cell $(r',c')$ on the same
diagonal, $r'-c'=r-c$.
\item[Duplicate.] Remove an occupied cell on diagonal $d$ and place two tokens on
two distinct empty cells of diagonal $d-1$ (so a duplicate at $d$ requires
$n_{d-1}\le L_{d-1}-2$).
\end{description}
\end{definition}

We fix the orientation so that a duplicate lowers $d$ by one; the mirror
convention ($d\mapsto d+1$, weight $\sum 2^{\,c-r}$) is symmetric.

\begin{definition}[Final position; rectangle]
$P$ is \emph{final} if all non-empty columns have the same pattern:
$\col_c(P)=\col_{c'}(P)$ whenever both are non-empty.  A position is a
\emph{(combinatorial) rectangle} if $P=R\times S:=\{(r,c):r\in R,\ c\in S\}$ for
some $R,S\subseteq[N]$.
\end{definition}

Every rectangle is final.  Conversely a non-empty final position has one common
pattern $R$ shared by its non-empty columns $S$, so $P=R\times S$; thus
\emph{final $=$ rectangle}.

Figure~\ref{fig:tg25} illustrates the two moves and how a play ends, on a $3\times3$ board.

\begin{figure}[ht]
\centering
\begin{tabular}{c@{\quad}c@{\quad}c}
{\footnotesize initial position} & {\footnotesize after a duplication} & {\footnotesize final: a rectangle}\\[3pt]
\begin{tikzpicture}[scale=0.62,baseline=(current bounding box.center)]
  \draw[tggrid] (0,0) grid (3,-3);
  \foreach \c in {0,1,2}{\node[font=\tiny,text=gray] at (\c+0.5,-3.32){c\c};}
  \foreach \r in {0,1,2}{\node[font=\tiny,text=gray] at (-0.32,-\r-0.5){r\r};}
  \foreach \r/\c in {2/0,1/0,0/2}{\fill[tgtok] (\c+0.5,-\r-0.5) circle (0.3);}
\end{tikzpicture}
&
\begin{tikzpicture}[scale=0.62,baseline=(current bounding box.center)]
  \draw[tggrid] (0,0) grid (3,-3);
  \foreach \c in {0,1,2}{\node[font=\tiny,text=gray] at (\c+0.5,-3.32){c\c};}
  \foreach \r in {0,1,2}{\node[font=\tiny,text=gray] at (-0.32,-\r-0.5){r\r};}
  \foreach \r/\c in {0/2,2/0}{\fill[tgtok] (\c+0.5,-\r-0.5) circle (0.3);}
  \foreach \r/\c in {0/0,1/1}{\fill[tgcols] (\c+0.5,-\r-0.5) circle (0.3);}
  \draw[->,thick,densely dashed,tgtok] (1.66,-1.66) -- (2.34,-2.34);
\end{tikzpicture}
&
\begin{tikzpicture}[scale=0.62,baseline=(current bounding box.center)]
  \foreach \c in {0,2}{\fill[tggrid!35] (\c,0) rectangle (\c+1,-3);}
  \draw[tggrid] (0,0) grid (3,-3);
  \foreach \c in {0,1,2}{\node[font=\tiny,text=gray] at (\c+0.5,-3.32){c\c};}
  \foreach \r in {0,1,2}{\node[font=\tiny,text=gray] at (-0.32,-\r-0.5){r\r};}
  \foreach \r/\c in {0/0,0/2,2/0,2/2}{\fill[tgtok] (\c+0.5,-\r-0.5) circle (0.3);}
\end{tikzpicture}
\end{tabular}
\caption{How the game is played, on a $3\times3$ board. A token may \emph{slide} along its
upper-left--to--lower-right diagonal, or \emph{duplicate} onto the neighbouring diagonal,
splitting into two. From the position on the left, one duplication sends the middle token
onto the main diagonal as two tokens, and one slide (dashed) carries the off-place one to the
corner. The play then \emph{ends}: the two non-empty columns (shaded) carry the same
pattern---a rectangle---which is the winning condition. What such a final position
\emph{means} arithmetically is taken up in Section~\ref{sec:fact}.}
\label{fig:tg25}
\end{figure}

\begin{problem}[Rectangularise, with promise]\label{prob}
Given a position $P_0$ and integers $p,q$, under the promise that a final position
with exactly $p\cdot q$ tokens is reachable from $P_0$, output a sequence of moves
transforming $P_0$ into such a final position.
\end{problem}

Here $p=|R|$ and $q=|S|$ for the target rectangle $R\times S$.

\begin{remark}[Scope and motivation]
These choices simplify without softening the core. On a general (rectangular) board a
rectangle target encodes an \emph{arbitrary} two-factor split of the board weight, so the
game is as hard as ordinary integer factoring; the \emph{square} board and the
\emph{balanced semiprime} target (two $N$-bit primes) single out factoring's canonical hard
case, as in RSA. The popcounts $p=|R|$ and $q=|S|$ are then supplied as a \emph{promise}.
This does not make the instance easier---by Theorem~\ref{thm:hard} it stays as hard as
splitting a balanced semiprime---but it fixes the dimensions of the target rectangle and
thereby \emph{shrinks the search space} a learning agent must explore. That is exactly our
purpose: Sections~\ref{sec:learn}--\ref{sec:mcts} attack the promised problem with an
AlphaZero-style~\cite{alphazero} Monte-Carlo tree search guided by a learned policy/value
network, and the
popcount promise is the lever that turns target selection into a constrained search rather
than an open one.
\end{remark}

\section{Invariants}\label{sec:inv}

Give the token at $(r,c)$ the weight $2^{\,r-c}$ and set
$W(P)=\sum_{(r,c)\in P}2^{\,r-c}=\sum_d n_d(P)\,2^{d}$.

\begin{lemma}[Conservation]\label{lem:W}
Both moves preserve $W$.
\end{lemma}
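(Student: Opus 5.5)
The plan is to check each move directly against the diagonal form $W(P)=\sum_d n_d(P)\,2^{d}$. The weight of a token depends only on its diagonal $d=r-c$, so it is enough to track how each move changes the counts $n_d$.

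\emph{Slide.} A slide removes a token from $(r,c)$ and places one on an empty cell $(r',c')$ with $r'-c'=r-c$. Every count $n_d$ is therefore unchanged, and so is $W$. Termwise, the change is $-2^{r-c}+2^{r'-c'}=0$.

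\emph{Duplicate.} A duplicate at diagonal $d$ removes one token of weight $2^{d}$. It adds two tokens on diagonal $d-1$, each of weight $2^{d-1}$. So $n_d$ drops by one, $n_{d-1}$ rises by two, and the net change in $W$ is
\[
-2^{d}+2\cdot 2^{d-1}=0 .
\]
This is exactly why the orientation was fixed so that a duplicate lowers $d$ by one: the doubling of tokens is balanced by the halving of their weight. The mirror convention uses weight $2^{c-r}$ and is handled identically.

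There is no real obstacle here. The only point to check is well-definedness: the move rules require the target cells to be empty and distinct, so the new position is again a set with one token per cell. The count bookkeeping above is therefore exact, with no merging of tokens. I would state this explicitly and conclude that $W$ is invariant along any play.
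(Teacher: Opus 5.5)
Your proof is correct and matches the paper's: a slide fixes $r-c$, and a duplicate at $d$ removes $2^{d}$ while adding $2\cdot 2^{d-1}=2^{d}$. Your remark that the target cells are distinct and empty is a harmless addition that makes the bookkeeping on counts $n_d$ exact.
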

\begin{proof}
A slide fixes $r-c$.  A duplicate removes $2^{d}$ and adds $2\cdot 2^{\,d-1}=2^{d}$.
\end{proof}

Since a slide can carry a token to \emph{any} empty cell of its diagonal, two
positions with the same count vector $(n_d)$ are reachable from one another by
slides alone.  So, up to reachability, a position is its count vector
$n=(n_d)$ with $0\le n_d\le L_d$, and a duplicate is the operation
\[
\mathrm{split}(d):\qquad n_d\ge 1,\ n_{d-1}\le L_{d-1}-2
\ \Longrightarrow\ n_d\mathrel{-}=1,\ n_{d-1}\mathrel{+}=2 .
\]

\begin{lemma}[Downward flow]\label{lem:U}
For every $k$, the top-cumulative weight $U_k(P)=\sum_{d\ge k}n_d 2^{d}$ is
non-increasing under moves.
\end{lemma}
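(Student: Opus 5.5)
The proof goes through the count-vector description: every move is a slide or a duplicate, so it suffices to check that each one does not increase $U_k$, for every fixed $k$. A slide keeps every $n_d$ fixed, so all $U_k$ are unchanged.

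For a duplicate at diagonal $d$, $n_d$ decreases by one and $n_{d-1}$ increases by two, while every other $n_{d'}$ is unchanged. I split according to where the cutoff $k$ lies relative to the two affected diagonals.
\begin{itemize}[leftmargin=2em]
\item If $k>d$, neither diagonal $d$ nor $d-1$ lies in the range $d'\ge k$, so $U_k$ is unchanged.
\item If $k\le d-1$, both diagonals lie in the range. The change is $-2^{d}+2\cdot 2^{d-1}=0$, which is the computation in Lemma~\ref{lem:W}, so $U_k$ is unchanged.
\item If $k=d$, only diagonal $d$ is counted. Then $U_k$ drops by exactly $2^{d}>0$.
\end{itemize}
In every case $U_k$ is non-increasing. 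A duplicate at $d$ strictly decreases $U_d$ and leaves every other $U_k$ fixed.

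There is no real obstacle here; the only care needed is the boundary case $k=d$. That is also the case worth recording for later: the strict drop there gives the monovariant the paper uses to bound solution length. For example, one can encode the vector $(U_k)_k$ lexicographically, or sum $U_k$ over $k$, and observe that it decreases strictly at each duplicate. Since slides are free by the count-vector reduction, this bounds the number of duplicates.
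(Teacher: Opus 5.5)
Your proof is correct and is essentially the paper's argument: slides fix the count vector, and a duplicate at $d$ leaves $U_k$ unchanged for $k\neq d$ while lowering $U_d$ by $2^{d}$. Your closing aside also checks out, since $\sum_{k=-(N-1)}^{N-1}U_k=\Phi+N\,W$ is the paper's monovariant up to a conserved constant. Note, though, that the paper's $N^2$ bound on duplicates comes from the token count, not from the strict decrease alone.
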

\begin{proof}
Slides fix $n$.  A $\mathrm{split}(d)$ with $d>k$ keeps the weight in
$\{\ge k\}$; with $d=k$ it sends $2^{k}$ to level $k-1$, decreasing $U_k$ by
$2^{k}$; with $d<k$ it does not touch $U_k$.
\end{proof}

\begin{lemma}[Monovariant; short solutions]\label{lem:phi}
$\Phi(P)=\sum_d n_d\,d\,2^{d}$ strictly decreases by $2^{d}$ under
$\mathrm{split}(d)$.  Hence no position repeats, and since each duplicate raises
the token count by $1$ while the board holds at most $N^2$ tokens, any play uses
at most $N^2$ duplicates.
\end{lemma}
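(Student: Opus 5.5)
We compute the change in $\Phi$ under each move directly and then read off the two consequences. A slide leaves the count vector $n=(n_d)$ unchanged, and $\Phi$ depends on $P$ only through $n$, so $\Phi$ is unchanged by slides.

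Under $\mathrm{split}(d)$ the only changes are $n_d\mapsto n_d-1$ and $n_{d-1}\mapsto n_{d-1}+2$. The change in $\Phi$ is therefore
\[
\Delta\Phi=-d\,2^{d}+2(d-1)2^{d-1}=-d\,2^{d}+(d-1)2^{d}=-2^{d}<0 .
\]
This holds for every sign of $d$, since $2^{d}>0$ always, including when $d$ is negative. Comparing with Lemma~\ref{lem:W}, the weight $W$ is conserved, but the factor $d$ attached to the weight makes the bookkeeping drop by exactly one unit of $2^{d}$.

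Next, the token count. A slide keeps $|P|$ fixed, and a duplicate raises $|P|$ by exactly $1$. Hence after $k$ duplicates from $P_0$ we have $|P|=|P_0|+k\le N^2$. This gives $k\le N^2-|P_0|\le N^2$.

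For non-repetition, the phrasing needs care, because a single slide followed by its reverse does return to the same cell set. I would state the claim at the level the paper already adopts, where positions are identified up to slide-reachability with their count vectors. Since $\Phi$ strictly decreases at each duplicate and never increases, no count vector can recur once a duplicate has occurred. Equivalently, after each duplicate the play is in a class it has never visited before. A second route to the same conclusion is the strictly increasing token count, which by itself forbids any return to an earlier class.

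Combining this with the observation that any two positions with the same count vector are connected by at most $|P|\le N^2$ slides (each token moved once, which needs care when a target cell is occupied but can be arranged by ordering along a diagonal), a solution can be shortened to $O(N^2)$ duplicates interleaved with slides. I expect the only delicate point to be this interpretive one of what "no position repeats" means given reversible slides. The algebra itself is a one-line computation.
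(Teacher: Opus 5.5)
Your proof is correct and follows the paper's approach: the paper's proof is exactly your one-line computation $\Delta\Phi=-d\,2^{d}+2(d-1)2^{d-1}=-2^{d}<0$, and it leaves the token-count bound (at most $N^2$ duplicates) implicit in the statement. Your remark that reversible slides let a literal cell set recur, so ``no position repeats'' should be read at the level of count vectors (no vector recurs across a duplicate), is a correct and worthwhile sharpening of the paper's wording; your closing paragraph on slide counts belongs to Corollary~\ref{cor:np}, not to this lemma.
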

\begin{proof}
$\Delta\Phi=-d\,2^{d}+2(d-1)2^{\,d-1}=-d\,2^{d}+(d-1)2^{d}=-2^{d}<0$.
\end{proof}

\begin{corollary}\label{cor:np}
If a final position is reachable, one is reachable within $O(N^2)$ moves
($\le N^2$ duplicates and $\le N^2$ slides).  In particular the decision version
of Problem~\ref{prob} is in $\mathsf{NP}$.
\end{corollary}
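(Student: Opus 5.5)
The plan is to turn an arbitrary winning play into one of bounded length by discarding redundant slides, and then to read membership in $\mathsf{NP}$ off the resulting certificate. The starting point is Lemma~\ref{lem:phi}. Every duplicate adds exactly one token and slides leave the token count unchanged, so any play from $P_0$ contains at most $N^2-|P_0|\le N^2$ duplicates. This holds whatever path is taken, so the duplicates are never the issue; only the slides have to be controlled.

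\textbf{Step 1: compress the slides.} Fix any play $P_0\to\cdots\to P_f$ ending in a final position. We work up to reachability, i.e.\ with count vectors $n=(n_d)$. The play is then a sequence of at most $N^2$ applications of $\mathrm{split}(d)$ separated by blocks of slides, and slides leave $n$ unchanged. We realise the same sequence of count vectors directly:
\begin{itemize}
\item Before each $\mathrm{split}(d)$, the precondition $n_d\ge1$, $n_{d-1}\le L_{d-1}-2$ depends only on $n$. So in whatever current position we hold, some token sits on diagonal $d$ and two empty cells exist on diagonal $d-1$. The duplicate can therefore be applied immediately, with no slides beforehand.
\item Following the original counts, we reach the end with a position $Q$ that has the same count vector as $P_f$. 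We then move $Q$ to $P_f$ itself, cell by cell, using slides.
\end{itemize}

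\textbf{Step 2: bound the final slides (the main point to check).} Transforming $Q$ into $P_f$ needs at most one slide per token of $P_f$ that is out of place. This is the one place needing care, because a target cell might still be occupied. Within each diagonal, $Q$ and $P_f$ have the same number of tokens. Hence the tokens of $Q$ that are not in $P_f$ and the cells of $P_f$ that are empty in $Q$ can be paired off bijectively within that diagonal. Each target cell in this pairing is empty at the moment it is used, since we only fill cells that were not already occupied. So one slide per misplaced token suffices, giving at most $|P_f|\le N^2$ slides. The total is at most $N^2$ duplicates plus $N^2$ slides, i.e.\ $O(N^2)$ moves.

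\textbf{Step 3: membership in $\mathsf{NP}$.} For the decision version, the certificate is such a move sequence. It has $O(N^2)$ moves, each described by $O(\log N)$ bits of cell coordinates. A verifier checks each move's legality against the $N\times N$ board in polynomial time. At the end it checks that all non-empty columns have equal patterns, which is the definition of a final position, and that the token count is $p\cdot q$. The bound is independent of $p,q$, so the promise plays no role here.
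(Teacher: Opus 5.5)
Your proof is correct and takes the paper's route, which the paper leaves implicit: Lemma~\ref{lem:phi} caps the duplicates at $N^2$ along any play, and because positions with equal count vectors are reachable from one another by slides, all seating can be deferred to one final round of slides. Your pairing of $Q\setminus P_f$ with $P_f\setminus Q$ within each diagonal supplies the $\le N^2$ slide bound that the paper states without proof, and ending at $P_f$ itself keeps the $p\cdot q$ token count that the decision version requires.
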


\section{Final positions are rectangles, and they factor}\label{sec:fact}

For a rectangle $R\times S$ define the \emph{row value} and \emph{column
selector}
\[
V \;=\; \sum_{r\in R}2^{r}, \qquad M \;=\; \sum_{c\in S}2^{\,N-1-c}.
\]
Both lie in $[1,2^{N}-1]$; the binary digits of $V$ are $R$ and those of $M$ are
$\{N-1-c:c\in S\}$.  Note $p=|R|=\popc(V)$ and $q=|S|=\popc(M)$, where $\popc$ is
the number of $1$-bits.

The reflection $c\mapsto N-1-c$ in $M$ is exactly what aligns the polynomial product with the
board geometry. Writing $\rho(x)=\sum_{r\in R}x^{r}$ and
$\tilde\sigma(x)=\sum_{c\in S}x^{\,N-1-c}$, their product convolves to
\[
\rho(x)\,\tilde\sigma(x)\;=\;\sum_{r\in R,\ c\in S}x^{\,r-c+N-1},
\]
whose exponent $r-c+N-1$ is the diagonal $d=r-c$ shifted into $[0,2N-2]$; evaluating at $x=2$
gives $V\cdot M=W(R\times S)\,2^{N-1}$, the scaled diagonal weight $\sum 2^{\,r-c}$. This
convolution is the polynomial $P$ used in Step~B (Section~\ref{sec:algo}).

\begin{theorem}[Factorisation identity]\label{thm:fact}
Let $P=R\times S$.  Then $W(P)\cdot 2^{N-1}=V\cdot M$.  Consequently, if a final
position is reachable from $P_0$ then the \emph{scaled integer}
\[
W' \;:=\; W(P_0)\cdot 2^{N-1}\ \in\ \Z
\]
factors as $W'=V\cdot M$ with $1\le V,M\le 2^{N}-1$, and the two factors read off
the rows and columns of the target.
\end{theorem}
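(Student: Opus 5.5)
The identity itself is a direct computation, so I will do that first and then transfer it to the starting position using conservation of weight. For $P=R\times S$ the definition of $W$ gives
\[
W(P)\,2^{N-1}=\sum_{r\in R}\sum_{c\in S}2^{\,r-c+N-1}
=\Bigl(\sum_{r\in R}2^{r}\Bigr)\Bigl(\sum_{c\in S}2^{\,N-1-c}\Bigr)=V\cdot M .
\]
The middle equality holds because the double sum over the product set $R\times S$ factors as a product of two single sums, once each exponent is split as $r+(N-1-c)$. This is exactly the polynomial convolution $\rho(x)\tilde\sigma(x)$ from the discussion before the theorem, evaluated at $x=2$. I will write it out in this factored form rather than through the polynomial, because the factored form makes the factorisation visible immediately.

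For the consequence, suppose a final position $P$ is reachable from $P_0$. By the remark after the definition of final positions, $P=R\times S$ for some $R,S\subseteq[N]$. We have $P\neq\emptyset$, since $W(P_0)>0$ whenever $P_0$ is non-empty, and Lemma~\ref{lem:W} gives $W(P)=W(P_0)$. I will treat $P_0=\emptyset$ as degenerate and exclude it. Because $P$ is non-empty, $R$ and $S$ are non-empty, so $V\ge 1$ and $M\ge 1$. Since $R,S\subseteq[N]$, each of $V$ and $M$ is a sum of distinct powers $2^{j}$ with $0\le j\le N-1$, so $V,M\le 2^{N}-1$.

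Applying Lemma~\ref{lem:W} along the move sequence and then the identity above gives
\[
W' = W(P_0)\,2^{N-1}=W(P)\,2^{N-1}=VM .
\]
This also shows $W'\in\Z$. Integrality is not obvious a priori, because $W(P_0)$ is a sum of powers $2^{r-c}$ with $r-c\ge-(N-1)$. It holds anyway, since multiplying by $2^{N-1}$ clears every such negative exponent.

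Finally, "the factors read off the rows and columns" is immediate from the construction. The binary digits of $V$ form the set $R$, and the binary digits of $M$ form the set $\{N-1-c: c\in S\}$. The only real care needed anywhere is the non-emptiness point above, which is what guarantees $V,M\ge1$; I expect no substantive obstacle.
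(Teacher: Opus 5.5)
Your proof is correct and follows the paper's own argument: you factor the double sum over $R\times S$ into a product of single sums, transfer the identity to $P_0$ by conservation of $W$ (Lemma~\ref{lem:W}), and get integrality from every exponent being at least $-(N-1)$. Your explicit handling of non-emptiness is something the paper's proof leaves implicit, and it is genuinely needed: the empty position is vacuously final, so $P_0=\emptyset$ must be excluded to get $V,M\ge 1$.
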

Throughout we keep $W(P)=\sum_{(r,c)\in P}2^{\,r-c}$ for the \emph{raw board weight}
(a conserved dyadic rational, Lemma~\ref{lem:W}) and reserve the primed symbol $W'=W\,2^{N-1}$
for its scaled \emph{integer} form---the number actually factored.
\begin{proof}
$W(R\times S)=\sum_{r\in R,\,c\in S}2^{\,r-c}
=\bigl(\sum_{r\in R}2^{r}\bigr)\bigl(\sum_{c\in S}2^{-c}\bigr)
=V\cdot\bigl(M\,2^{-(N-1)}\bigr)$.  Multiply by $2^{N-1}$.  All exponents in
$W(P_0)=\sum_d n_d 2^{d}$ satisfy $d\ge-(N-1)$, so $W'\in\Z$; and
$W'\le(2^{N}-1)^2<2^{2N}$.  Reachability preserves $W$ by Lemma~\ref{lem:W}.
\end{proof}

Thus $W'$ is a $2N$-bit number and any final position exhibits a factorisation of
it into two $N$-bit factors.

\section{Hardness: splitting balanced semiprimes}\label{sec:hard}

\begin{lemma}[Canonical maximal start]\label{lem:high}
Among all positions of a fixed weight $W$, the ``greedy-high'' position
$n^{0}$---obtained by filling diagonals from the largest $d$ downward until the
weight $W$ is exhausted---maximises $U_k$ for every $k$.  Consequently every
rectangle $R\times S$ with $W(R\times S)=W$ has $U_k(R\times S)\le U_k(n^{0})$ for
all $k$, and is reachable from $n^{0}$ by duplicates (the remaining capacity
conditions of Proposition~\ref{prop:A} hold because $R\times S$ is a genuine
rectangle).
\end{lemma}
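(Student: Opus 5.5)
The plan has two parts: an extremality argument for $n^0$, and a reachability argument that invokes Proposition~\ref{prop:A}.

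\textbf{Extremality of $n^{0}$.} Fix $W$ and write $n^{0}$ for the greedy-high count vector. Going from $d=N-1$ downward, each $n^{0}_d$ is the largest value in $[0,L_d]$ such that the weight already placed in $\{\ge d\}$ does not exceed $W$. Any fixed remainder is supported on diagonals $\ge-(N-1)$, so the process terminates exactly when $W'\in\Z$ and $W$ is attainable at all, which we assume.

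Let $n$ be any feasible count vector of weight $W$. We show $U_k(n)\le U_k(n^0)$ by downward induction on $k$, arguing by contradiction. Take the largest $k$ with $U_k(n)>U_k(n^0)$. Then $U_{k+1}(n)\le U_{k+1}(n^0)$, so $n_k>n^0_k$, and in particular $n^0_k<L_k$.

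By maximality of the greedy choice, adding one more token at $k$ would overshoot: $U_k(n^0)+2^k>W$. All quantities $U_j$ are multiples of $2^{j}$ restricted to levels $\ge j$, so both $U_k(n)$ and $U_k(n^0)$ are multiples of $2^k$. Hence $U_k(n)\ge U_k(n^0)+2^k>W$. But $n$ has nonnegative entries, so $U_k(n)\le W$, a contradiction.

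The subtle point is the divisibility claim. Terms $2^d$ with $d\ge k$ are multiples of $2^k$ even for negative $k$, when read in $2^{k}\Z$. This is what makes the overshoot step work.

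\textbf{Consequence for rectangles.} This is immediate: apply the first part to $n=n(R\times S)$.

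\textbf{Reachability.} Invoke Proposition~\ref{prop:A}. I expect it characterizes when a target count vector $t$ is reachable from $n$ by splits: it requires dominance $U_k(t)\le U_k(n)$ for all $k$, which is necessary by Lemma~\ref{lem:U}, together with capacity conditions $t_d\le L_d$ and room on intermediate diagonals during the forced downward chip-flow.

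We already have dominance. For capacity, use that the target is a genuine rectangle, whose diagonal counts satisfy $t_d=|\{(r,c)\in R\times S: r-c=d\}|\le L_d$. Along the canonical flow, the surplus $U_k(n^0)-U_k(t)$ is pushed from level $k$ to $k-1$ one split at a time. We then need to check that the intermediate occupancy on diagonal $k-1$ never exceeds $L_{k-1}$.

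\textbf{Main obstacle.} The capacity check is the step I expect to be hardest. My first attempt is to order the flow top-down, so that each diagonal $d$ only ever holds at most $t_d$ plus incoming carries. I would then bound the carries using greedy fullness: $n^0$ is full ($n^0_d=L_d$) above its partial level. Carries therefore arise only where $n^0$ has full diagonals, and splitting a full diagonal into the next diagonal must fit. I would verify this using $L_{d-1}\ge L_d-1$ together with the rectangle structure, deferring to whatever precise hypotheses Proposition~\ref{prop:A} states.
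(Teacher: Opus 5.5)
Your extremality argument is correct and complete. The largest $k$ with $U_k(n)>U_k(n^{0})$ has $n_k>n^{0}_k$, so $n^{0}_k<L_k$, and greedy maximality gives $U_k(n^{0})+2^k>W$. Since both values lie in $2^k\Z$, $U_k(n)>W$, which is impossible. The paper asserts this maximality without argument, so this part is a real addition. The argument never uses that $n^{0}$ has weight exactly $W$, so applying it at $k=-(N-1)$ also shows greedy-high exhausts $W$, which settles your termination remark.

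The gap is in reachability. You treat ``intermediate occupancy never exceeds $L_{k-1}$'' as something still to verify and leave it open. It is not a hypothesis of Proposition~\ref{prop:A}: its sufficiency direction (the pipelined schedule) is what absorbs it. The hypotheses are (i) $s_d\ge0$, (ii) $n^{*}_d\le L_d$, (iii) $L_{d-1}\ge 2$ whenever $s_d>0$. You never check (iii), the only one with content. Since $L_{d-1}=1$ only for $d-1=\pm(N-1)$, (iii) says $s_{-(N-2)}=0$. $W$-conservation gives $s_{-(N-1)}=0$, so the flow identity on the bottom diagonal reads $2s_{-(N-2)}=n^{*}_{-(N-1)}-n^{0}_{-(N-1)}\le 1$, forcing $s_{-(N-2)}=0$. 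This is the paper's route (its remark on extreme diagonals).

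The plan you sketch for the occupancy check would also fail. Take $N=4$, $R=\{0,1,3\}$, $S=[4]$, so $W'=165=11\cdot 15$. On $d=3,\dots,-3$, greedy-high gives $n^{0}=(1,2,2,0,1,0,1)$, the target is $n^{*}=(1,1,2,3,2,2,1)$, and $s=(0,1,2,1,1,0,0)$. A top-down schedule must begin with the split at $d=2$, which needs $n_1\le L_1-2=1$, whereas $n^{0}_1=2$. Also $s_0=1$ although $n^{0}_0=0$, so carries do not arise only at full diagonals.

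If you want to prove the scheduling rather than cite it (the paper only sketches it), the order that works is ``perform any legal split''. Track the current profile $n$ and remaining split counts $r$, with the invariant $n^{*}_j=n_j+2r_{j+1}-r_j$. Suppose no split were legal. The lowest $d$ with $r_d>0$ cannot be blocked from below, since that would give $n^{*}_{d-1}\ge L_{d-1}+1$; so $n_d=0$. Then $n^{*}_d\ge0$ forces $r_{d+1}>0$, and by (iii) $n_d=0\le L_d-2$, so diagonal $d+1$ must be empty too. Climbing to $d=N-1$ gives $n^{*}_{N-1}=-r_{N-1}<0$, a contradiction.
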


\begin{theorem}[Hardness]\label{thm:hard}
Any algorithm that, given $P_0$, outputs a reachable final position also outputs a
factorisation $W'=V\cdot M$ with $V,M<2^{N}$ (Theorem~\ref{thm:fact}).  Moreover,
given a balanced semiprime $W'=f\cdot g$ with $2^{N-1}<f,g<2^{N}$, the greedy-high
position of weight $W'\,2^{-(N-1)}$ is solvable, and its only factorisation into
two factors below $2^{N}$ is $f\cdot g$; hence a solution reveals $\{f,g\}$.  Thus
producing a final position is at least as hard as splitting balanced semiprimes,
and an $N^{O(\log N)}$ algorithm would split $b$-bit balanced semiprimes in time
$2^{O(\log^2 b)}$.  The promise version (Problem~\ref{prob}) inherits this by
enumerating the $O(N^2)$ pairs $(p,q)=(\popc f,\popc g)$.
\end{theorem}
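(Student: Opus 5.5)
The first claim follows directly from Theorem~\ref{thm:fact}. Any reachable final position is a rectangle $R\times S$, and $W$ is conserved by Lemma~\ref{lem:W}. So reading off $V=\sum_{r\in R}2^r$ and $M=\sum_{c\in S}2^{N-1-c}$ from the output gives $W'=V\cdot M$ with $1\le V,M\le 2^N-1$, using only $O(N)$ extra arithmetic on the output board.

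For the reduction, start from a balanced semiprime $W'=fg$ with $2^{N-1}<f,g<2^N$.

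\textbf{Solvability.} Write $f=\sum_{r\in R}2^r$ and $g=\sum_{j\in J}2^j$, and put $S=\{N-1-j:j\in J\}$. Then $V=f$ and $M=g$ for the rectangle $R\times S$. By Theorem~\ref{thm:fact}, $W(R\times S)\,2^{N-1}=fg=W'$, so $W(R\times S)=W'2^{-(N-1)}$. Lemma~\ref{lem:high} then says $R\times S$ is reachable from the greedy-high position $n^0$ of that weight, so the instance is solvable.

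I would also check that $n^0$ fits on the board, i.e.\ that the diagonal capacities $L_d$ can absorb the weight. This holds because the rectangle itself is a valid position of the same weight, and greedy filling from the top uses at most as much capacity at each level. I would cite the lemma's maximality of $U_k$ for this rather than re-derive it. Constructing $n^0$ from $W'$ is clearly polynomial: fill diagonals greedily from $d=N-1$ downward.

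\textbf{Uniqueness.} Suppose $W'=VM$ with $1\le V,M<2^N$. Since $W'=fg$ with $f,g$ prime, the only divisors are $1,f,g,fg$. $V=1$ forces $M=fg>2^{2N-2}\ge 2^N$ for $N\ge2$, which is impossible; symmetrically $M\neq 1$. $V=fg$ fails the same way. Hence $\{V,M\}=\{f,g\}$, and any solution reveals the factors.

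\textbf{Runtime.} A $b$-bit balanced semiprime has $N=\lceil b/2\rceil$. An $N^{O(\log N)}$ solver therefore runs in $2^{O(\log^2 N)}=2^{O(\log^2 b)}$, plus polynomial overhead for building $n^0$ and decoding $V,M$.

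\textbf{Promise version.} Problem~\ref{prob} needs the promise $(p,q)=(\popc f,\popc g)$, which is unknown. But $p,q\in\{1,\dots,N\}$, so I would run the promise solver on all $N^2$ pairs. Outputs from pairs whose promise is false need not be meaningful, so I would validate each candidate output: check that the moves are legal and that the final board is a rectangle. This takes polynomial time, and the valid solution from the true pair yields $f,g$ as above. This costs an $O(N^2)$ factor, which is absorbed in the bound.

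\textbf{Main obstacle.} The step needing the most care is solvability, i.e.\ that the greedy-high start actually reaches the rectangle. This leans entirely on Lemma~\ref{lem:high} and the capacity conditions of Proposition~\ref{prop:A}, and I would invoke those as given rather than reprove them. The divisor argument and the enumeration are routine.
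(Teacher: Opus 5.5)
Your proposal is correct and takes the same route as the paper, which gives no separate proof and whose justification is packed into the statement itself: Theorem~\ref{thm:fact} for the readout, Lemma~\ref{lem:high} for solvability, the divisors of $fg$ for uniqueness, and enumeration of the popcount pairs, to which you usefully add validation of each candidate output. Two small fixes are needed. First, the greedy-high start does \emph{not} use at most the rectangle's capacity on each diagonal: for $143$ it puts two tokens on $d=2$, while the rectangle has one there. The reason it exhausts the weight is the $U_k$-maximality you cite, $U_{-(N-1)}(n^{0})\ge U_{-(N-1)}(R\times S)=W$. Second, cut off each promise-solver call at its time bound, since on a false promise the solver need not halt.
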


\section{Algorithms once a target is fixed}\label{sec:algo}

By the invariants of Section~\ref{sec:inv} we may reason purely with count vectors
(diagonal profiles) and defer the board geometry to a final round of slides, carried out in
Step~B (Section~\ref{sec:stepB}).  Suppose the target rectangle $R\times S$ (equivalently the
factorisation $W'=V\cdot M$) is known, and let $n^{*}$ be its diagonal profile,
$n^{*}_d=|\{(r,c)\in R\times S: r-c=d\}|$.

\subsection{Step A: a forced downward chip-flow (polynomial)}

Weight can only move one diagonal down, so the flow that turns $n^0$ into $n^{*}$
is forced.  Let $s_d$ be the number of tokens pushed from $d$ to $d-1$.

\begin{proposition}[Step A]\label{prop:A}
Sweeping from the top, set $s_d=n^0_d+2\,s_{d+1}-n^{*}_d$ (with $s\equiv 0$ above
the board).  The duplicates transforming $n^0$ into $n^{*}$ are feasible iff
\begin{enumerate}[label=\textup{(\roman*)},leftmargin=2.4em]
\item $s_d\ge 0$ for all $d$ \ \ (equivalently $U_k(n^{*})\le U_k(n^0)$ for all $k$);
\item $n^{*}_d\le L_d$ for all $d$;
\item $L_{d-1}\ge 2$ whenever $s_d>0$.
\end{enumerate}
When feasible, a pipelined schedule realises it with
$\sum_d s_d=pq-|n^0|\le N^2$ duplicates, in $O(N^2)$ time.
\end{proposition}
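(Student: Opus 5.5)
The plan is to use the count-vector formulation from Section~\ref{sec:inv}. Along any play, let $\sigma_d$ be the number of $\mathrm{split}(d)$ moves performed. I would first show that $\sigma$ is forced. Each $\mathrm{split}(d)$ changes $n_d$ by $-1$ and $n_{d-1}$ by $+2$, so a play from $n^0$ to $n^{*}$ must satisfy
\[
n^{*}_d=n^0_d-\sigma_d+2\sigma_{d+1}\qquad\text{for every } d .
\]
Together with $\sigma\equiv0$ above the top diagonal, this recursion determines $\sigma$ uniquely from the top down, so $\sigma_d=s_d$. Every duplicate adds exactly one token, hence $\sum_d s_d=|n^{*}|-|n^0|=pq-|n^0|$. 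Lemma~\ref{lem:phi} bounds this by $N^2$.

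\textbf{Necessity.} Since the $s_d$ count actual moves, $s_d\ge 0$. For the equivalent form of (i), I reuse the proof of Lemma~\ref{lem:U}: splits at $d>k$ leave $U_k$ unchanged, splits at $d<k$ do not touch it, and each split at $d=k$ lowers it by $2^k$. Therefore
\[
U_k(n^0)-U_k(n^{*})=s_k\,2^k ,
\]
so $s_k\ge0$ iff $U_k(n^{*})\le U_k(n^0)$. Condition (ii) holds because $n^{*}$ is itself a position. Condition (iii) holds because a single split at $d$ already requires two empty cells on diagonal $d-1$.

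\textbf{Sufficiency: the schedule.} This is the main obstacle, because intermediate capacity can be violated. If all $s_d$ splits at level $d$ are done first, $n_{d-1}$ may temporarily reach $n^0_{d-1}+2s_d>L_{d-1}$ before level $d-1$ drains. I would therefore use a pipelined greedy rule. Track the pending counts $t_d$ (initially $s_d$). With current profile $n$, the balance identity gives
\[
n_d=n^{*}_d+t_d-2t_{d+1}.
\]
At each step, perform a split at the \emph{lowest} level $d$ with $t_d>0$ and $n_d\ge1$. It remains to check three things.
\begin{enumerate}[label=(\alph*),leftmargin=2.4em]
\item \emph{Such a $d$ exists while some $t>0$.} Take the highest pending level $d$. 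Then $t_{d+1}=0$, so $n_d=n^{*}_d+t_d\ge1$.
\item \emph{Capacity at $d-1$ when $t_{d-1}=0$.} The identity gives $n_{d-1}=n^{*}_{d-1}-2t_d\le L_{d-1}-2$ by (ii), using $t_d\ge1$.
\item \emph{Capacity at $d-1$ when $t_{d-1}>0$.} By minimality of the chosen $d$, level $d-1$ has no token, i.e.\ $n_{d-1}=0$. Then (iii) gives $L_{d-1}\ge 2$, so two empty cells are available.
\end{enumerate}
Each step lowers $\sum t$ by one, so the process ends exactly at $n^{*}$ after $\sum_d s_d$ duplicates.

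For the running time, computing $s$ takes $O(N)$. Each greedy choice can be found in $O(N)$, or amortised $O(1)$ by maintaining the set of pending levels, so the total is $O(N^2)$ for at most $N^2$ moves. Finally, the geometric realisation is free: by the slide-reachability remark of Section~\ref{sec:inv}, each count-level split can be executed on the board with the two new tokens placed on any empty cells of diagonal $d-1$.
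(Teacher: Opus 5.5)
Your proof is correct and is essentially the paper's argument. The flow is forced by the balance identity, necessity comes from Lemma~\ref{lem:U} and the capacity rules, and sufficiency comes from a pipelined schedule. Your lowest-eligible-level rule with the invariant $n_d=n^{*}_d+t_d-2t_{d+1}$ is a rigorous version of the paper's ``highest-surplus-first, free the diagonal below'' sketch: your cases (b) and (c) are exactly its two reasons, that the target leaves room below and that every transit diagonal has at least two cells.

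A few small points are worth making explicit:
\begin{itemize}
\item The chosen $d$ always has a diagonal below it because $s_{-(N-1)}=0$, which follows from $W$-conservation or from (iii) read with $L_{-N}=0$.
\item The equivalence in (i) is the telescoping identity $U_k(n^0)-U_k(n^{*})=s_k\,2^k$, read off directly from the definition of $s$. So it holds whether or not a play exists.
\item An $O(N)$ scan per move would give $O(N^3)$, so the $O(N^2)$ bound rests on your amortised version. Only levels $d$ and $d-1$ change per move, so a pointer that descends at most one level per move and otherwise scans upward costs $O(N^2)$ in total.
\end{itemize}
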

\begin{proof}[Proof sketch]
The identity $n^{*}_d=n^0_d+2s_{d+1}-s_d$ (weight in $=$ initial $+$ arrivals,
weight out $=$ pushes) determines $s$ uniquely top-down; $s_{-(N-1)}=0$ follows
from $W$-conservation.  Non-negativity of $s$ is exactly reachability by
Lemma~\ref{lem:U}; (iii) is the requirement that one may ever duplicate into a
transit diagonal; (ii) is the target capacity.  Executing highest-surplus-first,
freeing a transit diagonal downward before refilling it, never deadlocks because
every transit diagonal has $\ge 2$ cells and the target leaves room below.  The
count bound is immediate.
\end{proof}

\begin{remark}[Extreme diagonals]
The length-$1$ diagonals never receive duplicated flow. A duplicate into $d-1$ requires
$L_{d-1}\ge2$ (Definition~\ref{def:moves}), and $L_{-(N-1)}=1$, so condition~(iii) forces
$s_{-(N-2)}=0$: nothing is ever duplicated onto the bottom diagonal. Correspondingly a target
rectangle needs at most $n^{*}_{-(N-1)}=\mathbf 1[\,0\in R\text{ and }N-1\in S\,]\le
L_{-(N-1)}=1$ token there---the single cell $(0,N-1)$---which the greedy-high start already
supplies (its bottom count equals $n^{*}_{-(N-1)}$ once $s_{-(N-2)}=0$). Hence no rectangle
target violates~(iii) at the bottom. The top diagonal $d=N-1$, also of length~$1$, only
\emph{emits} flow downward and is therefore unconstrained.
\end{remark}

\begin{verbatim}
StepA(start, target):   # start = greedy-high profile, target = rectangle profile
    for d from N-1 down to -(N-1):
        s[d] = start[d] + 2*s[d+1] - target[d]        # s above the top is 0
        if s[d] < 0: return INFEASIBLE            # weight cannot flow upward
        if target[d] > L(d): return INFEASIBLE
        if s[d] > 0 and L(d-1) < 2: return INFEASIBLE
    cur = copy(start); moves = []
    while cur != target:                            # pipelined realization
        d = highest diagonal with cur[d] > target[d]
        while free(cur, d-1) < 2: push one token from d-1 downward (recurse)
        duplicate: d -> two on d-1; record move; update cur
    return moves                       # |moves| = pq - |start| <= N^2
\end{verbatim}

\subsection{Step B: seating by slides (polynomial when \texorpdfstring{$R,S$}{R,S} known)}\label{sec:stepB}

With $n^{*}$ frozen, sliding into the rectangle is a placement.  Encode the
profile as the polynomial
\[
P(x)\;=\;\sum_d n^{*}_d\,x^{\,d+N-1}\;=\;\rho(x)\,\tilde\sigma(x),\qquad
\rho(x)=\sum_{r\in R}x^{r},\quad \tilde\sigma(x)=\sum_{c\in S}x^{\,N-1-c},
\]
two $0/1$ polynomials of degree $<N$; note $P(2)=\rho(2)\tilde\sigma(2)=V\cdot
M=W'$.

\begin{proposition}[Step B]\label{prop:B}
\begin{enumerate}[label=\textup{(\roman*)},leftmargin=2.4em]
\item If $R,S$ are known, seating is $O(N^2)$: for each diagonal $d$, slide its $n^{*}_d$
tokens onto the target cells $\{(r,r-d):r\in R,\ r-d\in S\}$.
\item If only $n^{*}$ is given, recovering $(R,S)$ is the two-set turnpike (partial-digest)
problem: factor $P$ over $\Z$ in polynomial time and regroup its irreducible factors into two
$0/1$ products $\rho,\tilde\sigma$ of sizes $p,q$.
\item A dynamic program over $0/1$-valid partial products runs in $N^{O(k)}$, where $k$ is
the number of irreducible factors of $P$; this is $N^{O(\log N)}$ whenever $k=O(\log^2 N)$ and
polynomial when $k=O(1)$.
\item When $W'$ is a balanced semiprime---the hardness instances---its factors $V,M$ are
prime, so $\rho,\tilde\sigma$ are the base-$2$ expansion polynomials of $V,M$; taken at their
true degrees $\max R=\lfloor\log_2 V\rfloor$ and $\lfloor\log_2 M\rfloor$ (leading coefficient
$1$, so Cohn's hypothesis is met), they are irreducible by Cohn's theorem~\cite{bfo}. Then
$k=2$, and factoring $P$ over $\Z$ returns $(\rho,\tilde\sigma)$ outright, making Step~B
\emph{unconditionally polynomial}; large $k$ can occur only when $V$ or $M$ is composite.
\end{enumerate}
\end{proposition}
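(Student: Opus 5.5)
I will prove the four items of Proposition~\ref{prop:B} in order; each rests on the convolution identity $P(x)=\rho(x)\tilde\sigma(x)$ from Section~\ref{sec:fact}.

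\textbf{Item (i).} For fixed $d$, the cells of $R\times S$ on diagonal $d$ are exactly $\{(r,r-d): r\in R,\ r-d\in S\}$. Their number is $n^{*}_d$, which is also the current count on that diagonal after Step~A. So a bijection between current tokens and target cells on each diagonal exists. Each token is slid directly to its target cell. If that cell is occupied by a token not yet seated, I first move that token to its own target cell, or to a free cell of the diagonal, in cyclic order. This uses at most $2n^{*}_d$ slides per diagonal. Summed over all $d$, this is $O(pq)=O(N^2)$ slides, and the target sets are listed in $O(N^2)$ time.

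\textbf{Item (ii).} The coefficient of $x^{d+N-1}$ in $\rho\tilde\sigma$ counts the pairs $(r,c)\in R\times S$ with $r-c=d$. Hence recovering $(R,S)$ from $n^{*}$ is exactly finding $0/1$ polynomials $\rho,\tilde\sigma$ with prescribed product and supports of sizes $p,q$; this is the two-set partial-digest problem. By unique factorisation in $\Z[x]$, any such $\rho$ is a product of a sub-multiset of the irreducible factors of $P$, up to sign, and $\tilde\sigma$ is the complementary product. The LLL-based algorithm (Lenstra--Lenstra--Lov\'asz) factors $P$ over $\Z$ in time polynomial in $N$, since $P$ has degree $\le 2N-2$ and coefficients $\le N$. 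Regrouping is then a search over sub-multisets.

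\textbf{Item (iii).} I enumerate the sub-multisets of the $k$ irreducible factors, keeping only partial products that could still divide a $0/1$ polynomial of degree $<N$. Each candidate is multiplied out and its coefficients checked, at $\mathrm{poly}(N)$ cost per candidate. A repeated factor $f^{e}$ contributes $e+1$ choices, and $e\le 2N$. So there are at most $\prod_i(e_i+1)\le (2N+1)^{k}=N^{O(k)}$ candidates. This gives $N^{O(\log N)}$ as claimed when $k=O(\log N)$; the statement's $k=O(\log^2N)$ would only give $N^{O(\log^2 N)}$, so I would flag the exponent there. When $k=O(1)$ the bound is polynomial.

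\textbf{Item (iv).} This is the main step. Since $R,S\ne\emptyset$, write $\rho$ at its true degree $\max R$. Its coefficients are the binary digits of $V$, so $\rho(2)=V$ and each coefficient is at most $1$. Cohn's theorem in the Brillhart--Filaseta--Odlyzko base-$b$ form~\cite{bfo}, with $b=2$, says that if the value is prime then the digit polynomial is irreducible; hence $\rho$ is irreducible. For $\tilde\sigma$, its digits are those of $M$, since $M=\sum_{c\in S}2^{N-1-c}=\tilde\sigma(2)$. Taken at its true degree $\lfloor\log_2M\rfloor$ it has leading coefficient $1$, so the same theorem applies and $\tilde\sigma$ is irreducible.

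The obstacle is making sure there are no stray factors. A power of $x$ could divide $\tilde\sigma$ only if its constant term vanishes, that is, if $N-1\notin S$. Then $M$ would be even. But in the balanced semiprime case $M$ is an odd prime exceeding $2^{N-1}$, so this cannot happen; likewise $V$ is odd, so $0\in R$. Units $\pm1$ are harmless. Thus $P=\rho\tilde\sigma$ is a product of exactly $k=2$ irreducibles.

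Factoring $P$ therefore returns these two factors, and item (iii) with $k=2$ gives a polynomial bound. To decide which factor is $\rho$: both have $0/1$ coefficients, and the theorem treats the pair unordered, so either assignment is a valid rectangle once $\tilde\sigma$ is mapped back to columns via $c=N-1-e$. Finally, if $V$ and $M$ are both prime then $k=2$; so $k>2$ forces one of $V,M$ to be composite, which is the final contrapositive remark of item (iv).
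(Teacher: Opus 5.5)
Your route matches the paper's: the convolution identity, unique factorisation in $\Z[x]$ with LLL, a search over sub-multisets of the irreducible factors (the \texttt{StepB} pseudocode), and the Brillhart--Filaseta--Odlyzko form of Cohn's theorem for the prime case. Items (i), (ii) and (iv) go through.

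The one place you fall short of the statement is (iii). You are right that $N^{O(k)}$ with $k=O(\log^2N)$ gives only $N^{O(\log^2N)}$, so the statement's ``this is'' is an arithmetic slip. But the conclusion itself holds, and you can prove it rather than flag it, because your count $(2N+1)^{k}$ is too loose. Since $e+1\le 2^{e}$ for every integer $e\ge 0$, the number of sub-multisets is $\prod_i(e_i+1)\le 2^{\sum_i e_i}=2^{k}$, where $k$ counts irreducible factors with multiplicity. That is the reading under which the paper speaks of ``the $2^{k}$ subset-products'' in Section~\ref{sec:open}. With a $\mathrm{poly}(N)$ check per candidate, this gives $2^{O(\log^2N)}\,\mathrm{poly}(N)=N^{O(\log N)}$ for $k=O(\log^2N)$, and polynomial time already for $k=O(\log N)$. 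The stated $N^{O(k)}$ is then only a weaker envelope.

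Keep your pruning rule exactly as you stated it: discard a partial product only if it cannot divide any $0/1$ polynomial of degree $<N$. That rule is sound. The paper's pseudocode instead keeps states ``$0/1$ throughout'', and that can lose the true $\rho$. For instance, $1+x+x^3+x^4=(1+x)^2(1-x+x^2)$ (so $V=27$) has $0/1$ coefficients, but both $(1+x)^2$ and $1-x+x^2$ fail the $0/1$ test, so whichever is processed first is discarded.

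Smaller points:
\begin{itemize}
\item In (iv) you take ``$V,M$ prime'' for granted. It follows in one line: $V,M<2^N$ and $VM=fg$ with $f,g>2^{N-1}$ prime, so a factor equal to $1$ would force the other to be $fg>2^{2N-2}\ge 2^N$.
\item Your separate check against a factor $x$ is already contained in the BFO theorem.
\item In (i) no bijection is needed. Tokens are indistinguishable, and the unoccupied target cells on a diagonal are as numerous as the misplaced tokens, so at most $n^{*}_d$ direct slides suffice. A fixed bijection, by contrast, can cycle on a full diagonal unless it fixes tokens already sitting on target cells.
\end{itemize}
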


\begin{verbatim}
StepB(target):   # target = rectangle profile; returns (R,S), then seats
    # is_binary: coeffs in {0,1};  ones: #(1-coeffs);  supp: exponents present
    P = sum_d target[d] * x^(d+N-1)
    F = factor_over_Z(P)                 # polynomial time (LLL / van Hoeij)
    states = {1}                         # candidate rho, kept 0/1 throughout
    for (phi, e) in F:
        states = { g*phi^j : g in states, 0<=j<=e,
                   is_binary(g*phi^j) and ones(.) <= p and deg(.) < N }
    for rho in states with ones(rho)==p:
        st = P / rho
        if is_binary(st) and ones(st)==q:
            R = supp(rho);  S = { N-1-j : coeff_j(st)==1 }
            return seat_by_slides(R, S)  # O(N^2) slides
    return NO_RECTANGLE_WITH_THIS_PROFILE
\end{verbatim}

\section{Main theorem}\label{sec:main}

\begin{theorem}\label{thm:main}
Under the $p\cdot q$ promise, if a final position is reachable then one is
reachable within $O(N^2)$ moves, and a move sequence can be produced in time
$T_{\mathrm{fact}}(N)+O(N^2)$, where $T_{\mathrm{fact}}$ is the time to split
$W'=W(P_0)2^{N-1}$ into its two factors below $2^{N}$---exactly splitting the balanced
semiprime, and the only hard step; no sub-exponential bound for it is known. Given the
split, both phases are polynomial: Step~A by the forced downward flow, and Step~B because the
two $0/1$ factors $\rho,\tilde\sigma$ are the base-$2$ expansions of the primes $V,M$, hence
irreducible by Cohn's theorem~\cite{bfo}, so $P$ has exactly two irreducible factors that
$\Z$-factorisation recovers directly.  (For a general, non-prime balanced split the
regrouping of Proposition~\ref{prop:B} instead costs $N^{O(k)}$.)
\end{theorem}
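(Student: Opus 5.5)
The theorem assembles results already in the paper, so I would prove it in three moves: a length bound, a reduction of solving to factoring, and polynomial execution of the two remaining phases.

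\emph{Length bound.} Corollary~\ref{cor:np} gives this directly. By Lemma~\ref{lem:phi}, any play uses at most $N^2$ duplicates, and slides are needed only to seat tokens. So whenever a final position is reachable, some final position is reachable within $O(N^2)$ moves. The $p\cdot q$ promise plays no role in this part.

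\emph{Reduction to a split, then Step~A.} First compute the scaled integer $W'=W(P_0)2^{N-1}$. By Theorem~\ref{thm:fact}, any reachable final position $R\times S$ yields a factorisation $W'=V\cdot M$ with $V,M<2^N$, where $\popc V=p$ and $\popc M=q$ under the promise. Call the factoring oracle once, at cost $T_{\mathrm{fact}}(N)$, to obtain such a split; in the balanced-semiprime case this split is unique up to order, by Theorem~\ref{thm:hard}. Read off $R=\operatorname{supp}V$ and $S=\{N-1-j: j\in\operatorname{supp}M\}$, and use the promise to choose the orientation whose popcounts are $(p,q)$. Then compute the target profile $n^*$ and the profile of the start.

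Here is the point I would check carefully: Proposition~\ref{prop:A} is stated from the greedy-high start $n^0$, not from an arbitrary $P_0$. I would handle this by running the recursion $s_d=n_d(P_0)+2s_{d+1}-n^*_d$ directly from the profile of $P_0$. Condition (i) of Proposition~\ref{prop:A} is necessary by Lemma~\ref{lem:U}. Condition (ii) holds because $R\times S$ is a genuine rectangle, and condition (iii) holds at the bottom by the remark on extreme diagonals. Given reachability, the pipelined schedule produces $pq-|P_0|\le N^2$ duplicates in $O(N^2)$ time.

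One gap remains: a reachable rectangle with counts $pq$ might not be the one encoded by the returned split. When $W'$ is a balanced semiprime, uniqueness removes this gap. Otherwise, I would remark that the oracle is taken to return the split realised by the promised rectangle.

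\emph{Step~B and the main obstacle.} Once $R$ and $S$ are known explicitly, Proposition~\ref{prop:B}(i) seats all tokens with $O(N^2)$ slides. Since $V$ and $M$ are already in hand, the polynomial factorisation route of Proposition~\ref{prop:B}(iv) is not even needed. I would still cite it to match the statement: by Cohn's theorem, $\rho$ and $\tilde\sigma$ are irreducible, so $\Z$-factorisation of $P$ recovers them with $k=2$. For a composite balanced split, Proposition~\ref{prop:B}(iii) gives the $N^{O(k)}$ caveat. The total time is therefore $T_{\mathrm{fact}}(N)+O(N^2)$.

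I expect the main obstacle to be the Step~A feasibility argument from an arbitrary $P_0$ rather than from $n^0$, in particular justifying that the pipelined schedule never deadlocks. For that I would lean on the proof sketch of Proposition~\ref{prop:A}.
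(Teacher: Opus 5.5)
Your proposal is correct and follows the paper's proof essentially step for step: Corollary~\ref{cor:np} for the length bound, one call to the factoring step constrained by $\popc V=p$, $\popc M=q$, then Step~A followed by seating with Proposition~\ref{prop:B}(i). You go beyond the paper's one-line ``compute $n^0$ from $P_0$'' by running the $s_d$ recursion from the profile of $P_0$ and by noting that a non-unique split need not be the reachable one; your choice of orientation is also harmless when $p=q$, because swapping $V$ and $M$ leaves $\rho(x)\tilde\sigma(x)$, and hence $n^{*}$, unchanged.
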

\begin{proof}
Corollary~\ref{cor:np} bounds the length.  Compute $n^0$ from $P_0$ and
$W'=W(P_0)2^{N-1}$.  Find $V,M<2^{N}$ with $V\cdot M=W'$, $\popc V=p$, $\popc
M=q$ (this is $T_{\mathrm{fact}}$, the balanced-factoring step).  Set $R=\mathrm{bits}(V)$,
$S=\{N-1-j:\text{bit }j\text{ of }M\}$ and let $n^{*}$ be their profile.  Run
Step~A (Proposition~\ref{prop:A}) and then seat by slides
(Proposition~\ref{prop:B}); both are $O(N^2)$.  Hardness is
Theorem~\ref{thm:hard}.
\end{proof}

\begin{remark}
The exact complexity of the target-selection core, the finite-versus-asymptotic picture,
the worst-case boards of a general (non-prime) split, and the quantum status of the game are
taken up in Section~\ref{sec:open}.
\end{remark}

\section{A worked example: factoring \texorpdfstring{$143$}{143}}\label{sec:example}

Take $N=4$ and $M=143=11\cdot 13$, with $11=1011_2$ and $13=1101_2$. Laying $M$ on the
board as the conserved weight $W'=143$ by the greedy-high placement
(Lemma~\ref{lem:high}) gives the seven-token start of Figure~\ref{fig:tg143} (left).
Two duplications (Step~A) and three slides (Step~B) reach the rectangle $R\times S$ with
$R=S=\{0,1,3\}$ (right): its occupied rows give $V=2^0+2^1+2^3=11$ and its occupied
columns give $M_{\mathrm{sel}}=2^{3}+2^{2}+2^{0}=13$, so $143=11\cdot 13$. The weight
$W'=143$ is unchanged on every board.

\begin{figure}[t]
\centering
\begin{tabular}{c@{\qquad}c@{\qquad}c}
{\footnotesize initial ($W'{=}143$)} & {\footnotesize after Step~A} & {\footnotesize final rectangle}\\[3pt]
\begin{tikzpicture}[scale=0.62,baseline=(current bounding box.center)]
  \draw[tggrid] (0,0) grid (4,-4);\tglabels
  \foreach \r/\c/\w in {0/0/8,0/1/4,0/2/2,0/3/1,2/0/32,3/0/64,3/1/32}{%
    \fill[tgtok] (\c+0.5,-\r-0.5) circle (0.36);
    \node[font=\tiny,text=white] at (\c+0.5,-\r-0.5){\w};}
\end{tikzpicture}
&
\tgboard{0/0,0/1,0/2,0/3,1/0,1/1,2/0,2/2,3/0}
&
\begin{tikzpicture}[scale=0.62,baseline=(current bounding box.center)]
  \foreach \r in {0,1,3}{\fill[tgrows!14] (0,-\r) rectangle (4,-\r-1);}
  \foreach \c in {0,1,3}{\fill[tgcols!22] (\c,0) rectangle (\c+1,-4);}
  \draw[tggrid] (0,0) grid (4,-4);
  \foreach \r/\c in {0/0,0/1,0/3,1/0,1/1,1/3,3/0,3/1,3/3}{\fill[tgtok] (\c+0.5,-\r-0.5) circle (0.30);}
  \foreach \r/\w in {0/1,1/2,3/8}{\node[font=\scriptsize,text=tgrows] at (-0.5,-\r-0.5){\w};}
  \foreach \c/\w in {0/8,1/4,3/1}{\node[font=\scriptsize,text=tgcols] at (\c+0.5,-4.5){\w};}
\end{tikzpicture}
\end{tabular}
\caption{Factoring $143$ on a $4\times4$ board. \emph{Left:} $143$ laid on as weight
$W'=64{+}64{+}8{+}4{+}2{+}1=143$, each token marked with its weight $2^{\,r-c+3}$.
\emph{Step~A} (two duplications): $d{=}2$, $(3,1)\!\to\!(1,0),(2,1)$; then $d{=}1$,
$(2,1)\!\to\!(1,1),(2,2)$. \emph{Step~B} (three slides): $(0,2)\!\to\!(1,3)$,
$(2,2)\!\to\!(3,3)$, $(2,0)\!\to\!(3,1)$. \emph{Right:} the final rectangle on rows and
columns $\{0,1,3\}$; the {\color{tgrows}blue} row-weights $2^{r}$ sum to
$\color{tgrows}11$ and the {\color{tgcols}amber} column-weights $2^{\,3-c}$ (reflected) sum
to $\color{tgcols}13$, giving $143=\mathbf{\color{tgrows}11}\cdot\mathbf{\color{tgcols}13}$.}
\label{fig:tg143}
\end{figure}

The worked example was easy only because we already knew the factorisation. In general the
one hard step is \emph{target selection}---which rectangle $R\times S$ to aim for---and the
popcount promise turns this into a \emph{bounded} combinatorial search: the rows are a
$p$-subset and the columns a $q$-subset of $[N]$, and Step~B gives a cheap, exact test of
whether a candidate profile is a genuine factorisation. A short bounded action space (the
duplications), a constrained target, and a verifiable $0/1$ terminal reward are exactly the
ingredients an AlphaZero-style learner is built for: a policy network to propose moves and a
value network to prune, wrapped in Monte-Carlo tree search over the promised targets.
Sections~\ref{sec:learn}--\ref{sec:mcts} put this to the test---first cloning Step~A with a
network, then driving the search with it---and measure how far the promise's bounded
structure lets learning and look-ahead reach. (A dual \emph{full-or-$R$} variant of the game,
which factors the board complement $A^2-W'$, is developed in \ref{sec:hash}.)

\section{A neural probe of the hardness: cloning Step~A}\label{sec:learn}

The reduction of the game to factoring is concrete enough to test empirically: one can train
a neural network to imitate Step~A move by move and ask whether it ever assembles a full
solution.\footnote{A self-contained PyTorch implementation (data generation, model, and the
metrics below) and the Java reference solver are available at
\url{https://github.com/crasmarum/TokenGame}.} It does not, and \emph{why} it does not mirrors
Theorem~\ref{thm:hard} exactly.

\paragraph{Formulation.}
A state is the diagonal profile $n\in\Z_{\ge0}^{2N-1}$ (the coefficient vector of $P$). An
action is a duplication ``at index~$i$'', $1\le i\le 2N-2$, sending $n_i\to n_i-1$ and
$n_{i-1}\to n_{i-1}+2$; a distinguished \textsc{stop} action (the ``$-1$'') marks $n=n^{*}$.
Because the weight $\sum_i n_i 2^i=W'$ is conserved and determines $n$'s instance, the map
$n\mapsto(\text{next move})$ is a well-defined function---but computing it is exactly
recovering the factorisation, so the learning target \emph{is} factoring in supervised
disguise.

\paragraph{Data.}
For random $N$-bit primes $p\le q$ we lay $C=pq$ greedy-high (Lemma~\ref{lem:high}) to get
$n^{0}$, compute the rectangle target $n^{*}$, and replay the forced-flow duplication
sequence of Proposition~\ref{prop:A}, emitting one $(\text{profile},\text{move})$ pair per
step and a terminal \textsc{stop}. One $N=64$ instance yields $\approx 10^{3}$ pairs; the
canonical order $p\le q$ fixes the target, hence the labels.

\paragraph{Model and conditioning.}
The policy is a one-dimensional residual CNN (six convolutional blocks; a per-index
$1\!\times\!1$ ``move'' head alongside a globally pooled \textsc{stop} head;
$\approx0.19$M parameters), mapping features of shape $(\text{channels},2N-1)$ to $2N$
logits. Three input regimes probe how much of the answer must be supplied:
\emph{none} (the profile only), \emph{pop} (the profile plus the promised popcounts of
$p,q$ as constant channels), and \emph{target} (the profile plus $n^{*}$ itself). Deeper
variants add capacity and, with dilated kernels, receptive field; Figure~\ref{fig:n16arch}
diagrams the largest instance---the depth-$16$ dilated $1.2$M-parameter net used at
$N=16$---including the value head that the search of Section~\ref{sec:mcts} adds.
We adopt a one-dimensional dilated residual CNN rather than an attention-based Transformer
because the board profile is an ordered $1$D grid on which chip flow is strictly local
($d\to d-1$); exponentially dilated convolutions attain a complete global receptive field
across all $2N-1$ diagonals in $\lceil\log_2(2N-1)\rceil$ layers with minimal parameters.
Because the barrier is cryptographic (Theorem~\ref{thm:hard}), replacing local inductive
biases with global self-attention cannot alter the asymptotic hardness.

\begin{figure}[t]
\centering
\includegraphics[width=\linewidth]{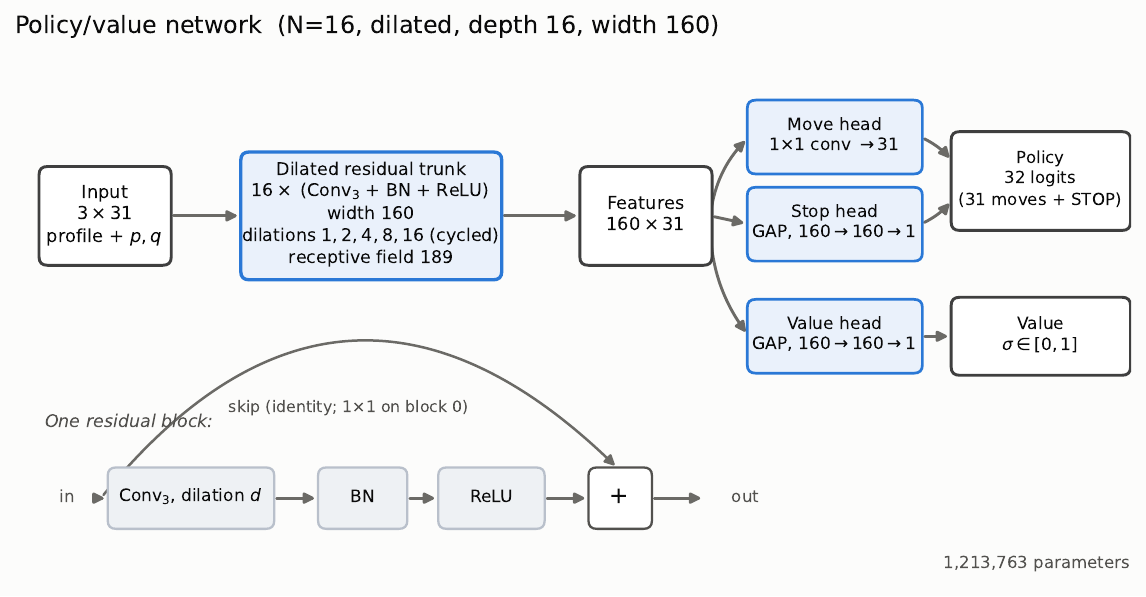}
\caption{The policy/value network, instantiated for $N=16$ (the depth-$16$ dilated,
$1.2$M-parameter checkpoint). The input channels are the profile plus the two constant
popcount channels (\emph{pop}); a stack of $16$ residual blocks
(Conv$_3$+BN+ReLU, width $160$, dilation cycling $1,2,4,8,16$) reaches receptive field
$189\gg 2N{-}1=31$, spanning the whole profile. A per-index $1{\times}1$ ``move'' head and a
globally pooled \textsc{stop} head form the $2N$ policy logits; a second pooled head gives the
scalar value ($\Pr[\,n^{*}\text{ reachable}\,]$) used by the tree search of
Section~\ref{sec:mcts}.}
\label{fig:n16arch}
\end{figure}

\paragraph{Training.}
We optimise with AdamW (weight decay $10^{-4}$) at a constant learning rate---$2\times10^{-4}$
for the largest $N=16$ run, $2\times10^{-3}$ for the smaller nets---and batch size up to
$1024$. Data is \emph{online}: each round draws $256$ fresh instances
($\approx1.8\times10^{4}$ (profile, move) pairs), regenerated every round, so there is no
fixed training set and overfitting is moot; runs last up to $4000$ rounds. The objective is
the cross-entropy of the policy over the legal duplications---with the \textsc{stop} class
up-weighted $\times20$ to offset its rarity (one per trajectory)---plus a binary
cross-entropy on the value head, in equal weight.

\paragraph{Metrics.}
\emph{Move accuracy} is the teacher-forced per-step agreement with the cloned label.
\emph{Greedy-solve} is the fraction of fresh instances for which the \emph{autonomous}
greedy rollout---$\arg\max$ over the legal actions, applied to the network's own profile
until \textsc{stop}---halts exactly at $n^{*}$.

\begin{table}[!ht]
\centering
\begin{tabular}{llccc}
\hline
$N$ & conditioning & channels & move-acc & greedy-solve\\
\hline
$8$  & pop         & $3$ & $0.97$ & $0.70$\\
$12$ & none         & $1$ & $0.79$ & $0.00$\\
$12$ & pop          & $3$ & $0.83$ & $0.00$\\
$12$ & pop, dilated & $3$ & $0.93$ & $0.08$\\
$12$ & target       & $2$ & $1.00$ & $\mathbf{1.00}$\\
$16$ & pop          & $3$ & $0.88$ & $0.00$\\
$16$ & pop, deep    & $3$ & $0.91$ & $0.00$\\
$16$ & pop, dilated & $3$ & $0.95$ & $0.05$\\
$64$ & pop          & $3$ & $0.93$ & $0.00$\\
\hline
\end{tabular}
\caption{Cloning Step~A (short runs; the $N=8,16,64$ rows use the full net and the $N=12$
rows a smaller quick net for the conditioning ablation; \emph{deep} is a $1.2$M-parameter
depth-$12$ net, $7\times$ the standard $0.19$M depth-$6$ net, and \emph{dilated} a
full-receptive-field net, depths $8$--$16$). High per-step accuracy coexists with near-zero
autonomous solves unless the target $n^{*}$ is supplied; splitting $C$ into $\{p,q\}$, which
\emph{target} hands over and \emph{pop} does not, is the entire difficulty. Full coverage
(\emph{dilated}) lifts $N=12$ and $N=16$ off $0$ (to $0.08$ and $0.05$); $0.95^{60}\approx
0.05$ shows greedy-solve still tracks $(\text{move-acc})^{\text{depth}}$, so the frontier
shifts a few bits without breaking.}
\label{tab:learn}
\end{table}

\paragraph{Reading.}
Table~\ref{tab:learn} separates the two halves of the game cleanly. \emph{target} makes the
task well-posed---each move is then the local forced-flow decision of
Proposition~\ref{prop:A}---and the network solves every instance. \emph{pop} reveals only
the rectangle's \emph{dimensions} $p\times q$, not which rows and columns; it nudges per-step
accuracy but leaves greedy-solve at $0$, because choosing the rows and columns is choosing
the factorisation. Across scale the two metrics decouple: move accuracy stays high
everywhere ($0.9$--$0.97$), while greedy-solve is nonzero only in the small-$N$
regime---$0.70$ at $N=8$---and collapses to $0$ by $N=16$--$64$. The long high-index prefix
of each trajectory is the near-trivial ``split the top diagonal'' rule, so imitating it well
says nothing about the few decisive low-index moves that encode the split; one wrong move
leaves the cloned distribution onto an unseen profile, and reaching $n^{*}$ exactly would in
any case factor a $2N$-bit balanced semiprime. The crossover near $N\approx 8$--$16$ marks
where a network of this size stops covering the factorisation. Coverage, not just capacity,
sets this frontier: giving the network a full receptive field (a dilated kernel-$3$ stack)
raises $N=12$ greedy-solve from $0.00$ to $0.08$ and lets tree search factor $23$-bit
semiprimes ($4/5$, Table~\ref{tab:mcts}), shifting the crossover outward by a few bits.
Scaling the network confirms the limit quantitatively. Deeper, better-covered nets raise
move accuracy---$0.88$ (depth $6$) to $0.91$ (depth $12$) to $0.95$ (a depth-$16$ dilated net
whose receptive field finally spans the profile)\footnote{The receptive field must cover the
whole length-$(2N-1)$ profile, because the correct move depends on cumulative sums across all
diagonals (the forced flow of Proposition~\ref{prop:A}); a plain kernel-$k$ stack reaches
this only at depth $\Theta(N/k)$, whereas kernel-$3$ blocks with exponentially growing
dilation $1,2,4,\dots$ span it in $\lceil\log_2(2N-1)\rceil$ layers---the same architecture
then serves every $N$.}---yet greedy-solve keeps tracking $(\text{move-acc})^{\text{depth}}$:
over trajectories of mean depth $\approx 60$, $0.95^{60}\approx 0.05$, matching the observed
$0.047$. One wrong move leaves the cloned distribution onto an unseen profile, so greedy play
alone stays near $0$. What the extra accuracy buys is realised only under \emph{search}: the
same depth-$16$ dilated net, with a value head and $20{,}000$ Monte-Carlo simulations,
factors $2/5$ of random $N=16$ instances (Section~\ref{sec:mcts})---including $31$-bit
semiprimes with $c[p]\cdot c[q]\sim 2\times10^{5}$ and depth-$60$ trajectories. This fits
hardness being \emph{asymptotic}: $N=16$ is still the finite, tabulatable regime
($\approx4.6$M semiprimes, Section~\ref{sec:open}), so capacity and search climb into it a
few bits at a time, while no fixed architecture escapes Theorem~\ref{thm:hard} as $N$ grows.
By Theorem~\ref{thm:main}, a classical learner that closed the move-accuracy/greedy-solve gap
at scale would be a sub-exponential factoring algorithm. Figure~\ref{fig:n16learn} shows the
full training run behind these numbers.

\begin{figure}[t]
\centering
\includegraphics[width=\linewidth]{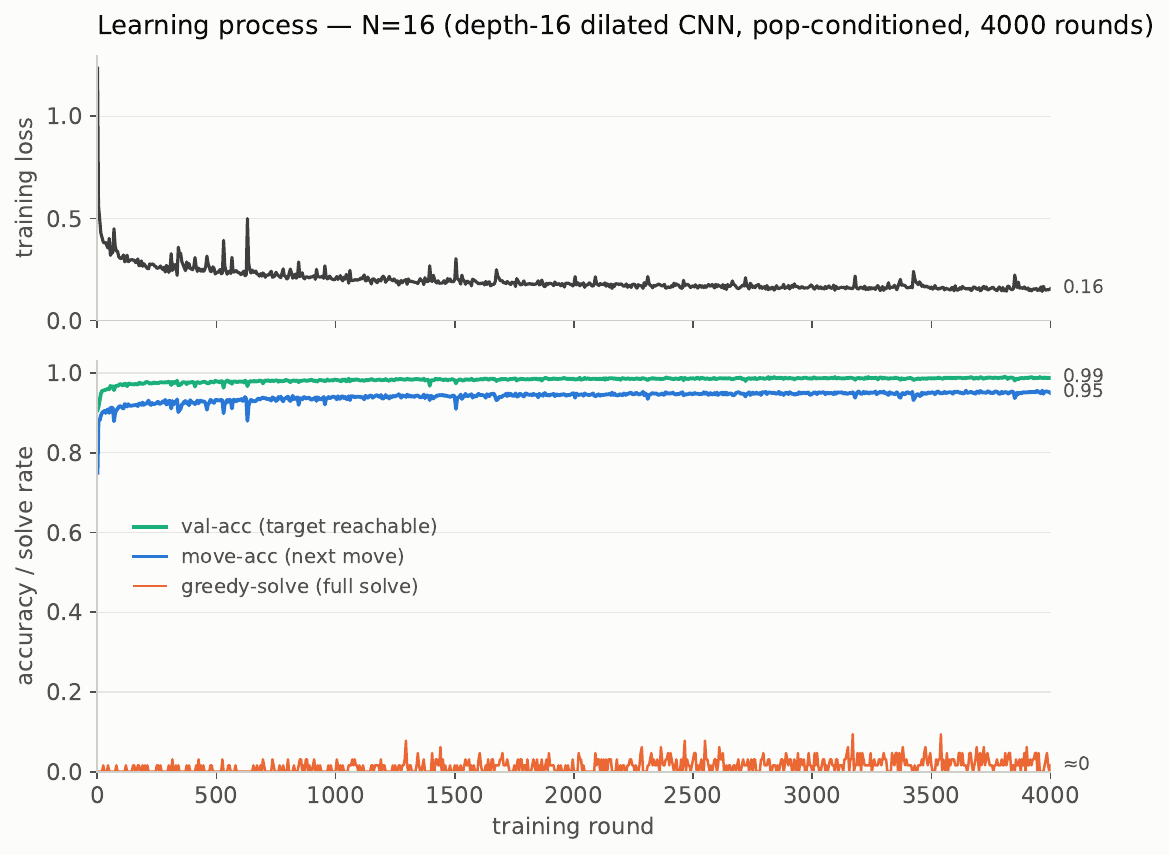}
\caption{Learning curves for the largest $N=16$ run (depth-$16$ dilated network, $1.2$M
parameters, \emph{pop} conditioning, $4000$ rounds). The training loss falls and both
per-step metrics saturate---\emph{move accuracy} at $0.95$ and \emph{val accuracy} (target
reachability) at $0.99$---while \emph{greedy-solve} (full autonomous solves) never leaves the
neighbourhood of $0$. Local imitation is learned; end-to-end solving is not, matching
$0.95^{60}\approx0.05$ and the hardness of Theorem~\ref{thm:hard}.}
\label{fig:n16learn}
\end{figure}

\section{Search with a learned prior: Monte-Carlo tree search}\label{sec:mcts}

The network of Section~\ref{sec:learn} is a pure policy; the natural strengthening is to
wrap it in look-ahead, as in AlphaZero, and let search compensate for imperfect priors. We
do, and meet the same wall from the other side.\footnote{The accompanying implementation
includes this tree search and the Step-B reward.}

\paragraph{Tree.}
A node is a profile; the root is the greedy-high placement of the number $m$ to be factored,
$n^{0}$. An action is a duplication at index~$i$. Since every move adds exactly one token,
every root-to-leaf path has the \emph{same} length $pq-\sum_i n^{0}_i$ and every leaf carries
$pq$ tokens---the promised final count.

\paragraph{Reward (Step~B).}
At a leaf we run Step~B (Proposition~\ref{prop:B}): factor the leaf polynomial into two
$0/1$ polynomials $\rho,\tilde\sigma$ of sizes $p,q$, and return $1$ iff this succeeds with
$\rho(2)\tilde\sigma(2)=m$, else $0$. Step~B is polynomial, so the reward is cheap to
evaluate; the difficulty is \emph{reaching} a rewarding leaf, since among the many
$pq$-token profiles of weight $m$ only the (essentially unique) rectangle profile of $m$'s
factorisation scores $1$.

\paragraph{Selection (PUCT).}
Descend by
\[
a^{*}=\arg\max_{a}\ Q(s,a)+c_{\mathrm{puct}}\,P(s,a)\,
\frac{\sqrt{\sum_b N(s,b)}}{1+N(s,a)},
\]
where the prior $P(s,\cdot)$ is the policy network of Section~\ref{sec:learn} softmaxed over
the legal duplications---the ``additional weight'' the model places on moves. We take
$c_{\mathrm{puct}}=1.5$ and a budget of $2000$--$20{,}000$ simulations per move (the $N=16$
results use $20{,}000$); each simulation expands exactly one leaf, so there is no separate
node-count cap. The search then commits the most-visited move and recurses.

\paragraph{Value head.}
A non-terminal leaf needs a value. Two options coincide in the limit. A policy-guided
\emph{rollout} to a terminal backs up the exact $0/1$ Step-B reward---no approximation error,
but an $O(N^2)$ playout per simulation. Alternatively we add a \emph{value head} to the
network: a second output $\sigma(v)\approx\Pr[\,n^{*}\text{ reachable from this profile}\,]$.
Reachability is precisely the $0/1$ value---a profile that still majorises $n^{*}$ (a
nonnegative forced flow, the Step-A feasibility test of Proposition~\ref{prop:A}) yields
reward $1$ under optimal play, otherwise $0$---so the head is trained by the very reduction it
accelerates, on the cloned trajectories (reachable, label $1$) together with random off-path
profiles that have left the feasible cone (label $0$). It reaches about $0.90$ reachability
accuracy at $N=8$ and replaces each rollout with a single forward pass, so a fixed compute
budget buys many more simulations. Either way the reward is exact at the leaves that matter,
and the only obstacle is the size of the tree.

\paragraph{Findings.}
At small $N$ the search solves outright, recovering the primes, and the learned prior
sharpens a uniform one (Table~\ref{tab:mcts}): at $N=8$ the bare greedy policy already solves
about $0.70$ of instances (Table~\ref{tab:learn}), and wrapping it in tree search lifts that
to $5/5$ (the value head matches this while evaluating leaves in one forward pass rather than
a rollout). Capacity and search then push the frontier through the small sizes: with a
full-coverage dilated net they clear $N=12$ ($4/5$, $23$-bit semiprimes), and a depth-$16$
dilated net with the value head and $20{,}000$ simulations reaches $N=16$ itself---$2/5$ of
random $31$--$32$-bit instances, including hard ones with candidate counts
$c[p]\cdot c[q]\sim 2\times10^{5}$ and depth-$60$ trajectories (e.g.\ $34589\cdot44711$),
where the bare policy is near $0$. This answers the natural question---\emph{would a deeper
network clear $N=16$?}---in the affirmative, but for a telling reason: $N=16$ is still the
\emph{finite, tabulatable} regime ($\approx4.6$M semiprimes, Section~\ref{sec:open}), so a
large enough model and search climb into it. The wall is structural but asymptotic: the
leaves are the $pq$-token profiles of weight $m$, a set that grows while the rewarding subset
stays of size $\approx 1$, so locating it is the target-selection of
Theorem~\ref{thm:hard}. A larger prior or more simulations shift the frontier outward by a
bounded number of bits---each added bit roughly doubling the tree---so the gain is a
constant-factor edge that is exhausted once $N$ leaves the tabulatable range ($N\gtrsim32$);
by Theorem~\ref{thm:main}, an MCTS that solved at scale would split balanced semiprimes in
sub-exponential time.

\begin{table}[t]
\centering
\begin{tabular}{lllc}
\hline
$N$ & prior & sims/move & solve rate\\
\hline
$6$  & uniform                              & $6000$  & $3/5$\\
$6$  & learned (\emph{pop})                 & $6000$  & $5/5$\\
$8$  & learned (\emph{pop})                 & $2000$  & $5/5$\\
$12$ & learned (\emph{pop, dilated})        & $20000$ & $4/5$\\
$16$ & learned (\emph{pop})                 & $2000$  & $0/5$\\
$16$ & learned (\emph{pop, dilated depth 16}) & $20000$ & $2/5$\\
\hline
\end{tabular}
\caption{Monte-Carlo tree search on random semiprimes (five instances each; illustrative).
The learned prior lifts the solve rate at $N=6$, clears $N=8$ ($16$-bit semiprimes, e.g.\
$167\cdot211$) and, with a full-coverage dilated net and a value head, $N=12$ ($23$-bit,
e.g.\ $2213\cdot2293$). A small net leaves $N=16$ at $0/5$, but a depth-$16$ dilated net with
the value head and $20{,}000$ simulations reaches $2/5$ of $31$--$32$-bit instances (e.g.\
$34589\cdot44711$, with $c[p]\cdot c[q]\approx2\times10^{5}$)---$N=16$ being the last
tabulatable size; the wall is asymptotic ($N\gtrsim32$). The tabulated rates are
five-instance point estimates; a larger $50$-instance sweep at $N=8$ gives $49/50=0.98$
(Wilson $95\%$ CI $[0.90,1.00]$), calibrating the $5/5$ entry. Matched $50$--$100$-instance
sweeps at $N=12,16$ (at $20{,}000$ simulations per move) are compute-intensive and left to
GPU runs.}
\label{tab:mcts}
\end{table}

\section{Discussion and open problems}\label{sec:open}

Steps~A and~B are polynomial, so the solver confines all difficulty to one
number-theoretic split (Theorem~\ref{thm:main}); four questions sharpen its status.

\paragraph{Finite versus asymptotic hardness.}
That the experiments of \S\ref{sec:learn}--\S\ref{sec:mcts} factor instances up to $N=16$ does
not contradict the hardness above, because the difficulty is asymptotic. For fixed $N$ the
instance space is finite and, for small $N$, tiny: there are only $\pi(2^{N})-\pi(2^{N-1})$
primes of exactly $N$ bits---e.g.\ $3030$ at $N=16$, hence $\binom{3030}{2}\approx4.6$M
balanced semiprimes, a table one could precompute. The $\popc$ promise shrinks each instance
further to the $c[p]\,c[q]$ prime pairs of the promised popcounts, where by the prime number
theorem
\[
c[p]\ \approx\ \frac{\binom{N-1}{p-1}}{N\ln 2}
\quad\Bigl(\text{or }\tfrac{2\binom{N-2}{p-2}}{N\ln 2}\text{ forcing the odd bit}\Bigr),
\qquad
c[p]\,c[q]\ \approx\ \frac{\binom{N-1}{p-1}\binom{N-1}{q-1}}{(N\ln 2)^2}.
\]
The numerator counts $N$-bit integers of popcount $p$ exactly, and $1/(N\ln 2)$ is the prime
density near $2^{N}$; the odd-bit variant shifts the popcount peak up by one, as the data
show. For balanced popcounts $p\approx q\approx N/2$ this is $\Theta(2^{2N}/N^{3})$: the
promise removes only a polynomial factor and the search stays exponential---whereas a rare
popcount makes $\binom{N-1}{p-1}$ small (a handful of candidates), which is exactly why those
instances are easy. A large enough model with enough search therefore
climbs into the small-$N$ regime; but each added bit doubles the primes, the semiprimes, and
the tree, the tabulatable range ends near $N\approx32$ ($\sim2\times10^{8}$ primes,
$\sim10^{16}$ semiprimes), and beyond it Theorem~\ref{thm:hard} bites.

\paragraph{(1) Counting $0/1$-polynomial divisors.}
Step~B must single out the divisors of $P$ that themselves have coefficients in
$\{0,1\}$. The crude estimate is the $2^{k}$ subset-products of the $k$ irreducible
factors of $P$, but genuine $0/1$ divisors are far rarer than arbitrary subset-products.
We conjecture that \emph{every degree-$D$ integer polynomial has at most $D^{O(\log D)}$
divisors in $\{0,1\}[x]$}. A proof would let Step~B enumerate them directly in
quasi-polynomial time, making target selection---and hence the whole
solver---\emph{unconditionally} $N^{O(\log N)}$, with no hypothesis on the factor count
$k$. The bound holds trivially when $k=O(\log N)$ (then even $2^{k}$ is polynomial) and is
false for unrestricted coefficients; the gap between these is the open content, and by
item~(3) it is exactly what the worst-case boards exercise.

\paragraph{(2) Exact complexity of the target-selection core.}
Recovering $(R,S)$ from the profile $n^{*}$ alone (Proposition~\ref{prop:B}) is the
factorisation of $P=\rho\,\tilde\sigma$ into two $0/1$ polynomials---a partial-digest /
turnpike-type reconstruction of two sets. Under the substitution $x\mapsto2$ it becomes
splitting the balanced semiprime $M=\rho(2)\,\tilde\sigma(2)$, so the search problem is
integer factoring in disguise. Its decision version---``does $M$ have a factor in
$[2^{N-1},2^{N})$ of popcount $p$?''---lies in $\mathsf{NP}\cap\mathsf{coNP}$: the prime
factorisation of $M$ together with Pratt primality certificates~\cite{pratt} witnesses
\emph{both} the yes- and the no-answer. This is strong evidence that the core is \emph{not}
$\mathsf{NP}$-complete, since that would force $\mathsf{NP}=\mathsf{coNP}$; symmetrically,
a polynomial-time algorithm for it would split balanced semiprimes classically. The core
therefore sits squarely in the factoring-flavoured ``intermediate'' regime, and deciding
between $\mathsf{P}$ and $\mathsf{NP}$-completeness is tantamount to resolving the status of
factoring itself---the precise sense in which its class is open. (For contrast, the
\emph{classical} turnpike problem---reconstruct points on a line from their multiset of
pairwise differences---admits only a pseudo-polynomial backtracking
algorithm~\cite{ssl} and has itself resisted classification for decades.)

\paragraph{(3) Which boards are worst-case (the general split).}
By Proposition~\ref{prop:B} the prime promise forces $k=2$ (Cohn's theorem), so a large
factor count is a feature of the \emph{general} balanced split---where $V,M$ may be
composite---not of the semiprime instances. There the dynamic program runs in $N^{O(k)}$
with $k$ the number of irreducible factors of $P$, and large $k$ arises precisely when
$\rho$ or $\tilde\sigma$ is cyclotomic-rich, i.e.\ when a \emph{composite} factor has a
highly structured binary expansion. The extremal case is a composite Mersenne-type factor
$V=2^{m}-1$ with $m$ composite: then $R=\{0,\dots,m-1\}$ is contiguous and
\[
\rho(x)=1+x+\cdots+x^{m-1}=\prod_{d\mid m,\ d>1}\Phi_d(x)
\]
splits into $\tau(m)-1$ irreducible cyclotomic factors ($\tau$ the divisor function; a
Mersenne \emph{prime} has $m$ prime and $\rho=\Phi_m$ irreducible). Arithmetic-progression
and base-$b$ repunit bit-sets behave the same way. Since $\tau(m)$ is
$2^{\Omega(\log m/\log\log m)}$, such $k$ \emph{exceeds every fixed power of $\log N$}, so on
these boards the naive $2^{k}$ enumeration is super-quasi-polynomial---exactly the regime
that the $0/1$-divisor bound of item~(1) would defuse. On prime factors, by contrast, Cohn's
theorem keeps $\rho$ irreducible, $P$ two-factored, and Step~B polynomial.

\paragraph{(4) Quantum status.}
The game's hardness core is balanced-semiprime splitting, and integer factoring is in
$\mathsf{BQP}$ by Shor's algorithm~\cite{shor}---balancedness neither helps nor hinders
Shor, which factors any composite in polynomial time. Hence target selection is in $\mathsf{BQP}$: a
quantum machine factors $M=pq$ in time $\mathrm{poly}(N)$ and, composed with the classical
polynomial Steps~A and~B, outputs the full $O(N^2)$ move sequence in quantum polynomial
time. The decision problem ``is a final position of at most $p\cdot q$ tokens reachable?''
is therefore in $\mathsf{BQP}$, and the game is \emph{not} a candidate for post-quantum
hardness: its apparent classical intractability is exactly as quantum-fragile as the
factoring problem it encodes.

\paragraph{Constraint-satisfaction view.}
A final position is a combinatorial rectangle $R\times S$, i.e.\ a \emph{biclique} in the
$N\times N$ board seen as a bipartite graph on rows and columns; target selection asks for a
biclique of prescribed weight $V\cdot M=W'$ and prescribed dimensions $|R|=p$, $|S|=q$. This
places the game in the family of weighted-biclique and monochromatic-rectangle problems---the
maximum edge biclique problem is $\mathsf{NP}$-hard, and $0/1$-matrix rectangles are the
atoms of communication-complexity lower bounds---so the promised problem is naturally read as
a constraint-satisfaction instance: choose $p$ rows and $q$ columns whose weighted product
hits a target. What distinguishes our instance from generic biclique/SAT search is the
number-theoretic weight constraint, which (Theorem~\ref{thm:hard}) is precisely balanced
factoring; the popcount promise fixes the biclique's side lengths, and the diagonal invariant
turns ``reach a monochromatic rectangle'' into ``split $W'$''. A tighter reduction to or from
standard weighted-CSP encodings (independent set, exact cover) would connect the game to the
broader constraint-satisfaction toolbox.

\appendix
\renewcommand{\thesection}{Appendix~\Alph{section}}

\section{Binary operation on weights, and the full-or-\texorpdfstring{$R$}{R} variant}\label{sec:hash}

Consider the variant in which a position is \emph{final} when every column is either
\emph{full} (the all-ones pattern $11\cdots1$, value $A:=2^{N}-1$) or equal to one common
pattern $R$. Putting $R$ on every column and then completing the full columns gives, for
$U$ the (reflected) selector of the full columns and $V_R$ the value of $R$,
\[
W' \;=\; A\cdot U + V_R\,(A-U).
\]
This is one instance of a natural bitwise operation, which we isolate and analyse; the
complementary game it governs is illustrated in Figure~\ref{fig:tg82}.

\begin{figure}[t]
\centering
\begin{tabular}{c@{\qquad}c@{\qquad}c}
{\footnotesize initial ($W'{=}82$)} & {\footnotesize after Step~A} & {\footnotesize complementary grid}\\[3pt]
\begin{tikzpicture}[scale=0.62,baseline=(current bounding box.center)]
  \draw[tggrid] (0,0) grid (4,-4);\tglabels
  \foreach \r/\c/\w in {3/0/64,1/0/16,0/2/2}{%
    \fill[tgtok] (\c+0.5,-\r-0.5) circle (0.36);
    \node[font=\tiny,text=white] at (\c+0.5,-\r-0.5){\w};}
\end{tikzpicture}
&
\tgboard{0/0,0/1,0/2,1/0,1/2,2/0,2/1}
&
\begin{tikzpicture}[scale=0.62,baseline=(current bounding box.center)]
  \foreach \r in {0,1,3}{\foreach \c in {0,1,3}{\fill[tggrid!35] (\c,-\r) rectangle (\c+1,-\r-1);}}
  \draw[tggrid] (0,0) grid (4,-4);\tglabels
  \foreach \r/\c in {0/2,1/2,2/0,2/1,2/2,2/3,3/2}{\fill[tgtok] (\c+0.5,-\r-0.5) circle (0.30);}
\end{tikzpicture}
\end{tabular}
\caption{The complementary (full-or-$R$) game on $143$, with $A^2-143=225-143=82$.
\emph{Left:} $82=64{+}16{+}2$ laid on the board. \emph{Step~A} (four duplications):
$(3,0)\!\to\!(2,0),(3,1)$; $(3,1)\!\to\!(2,1),(3,2)$; $(3,2)\!\to\!(0,0),(1,1)$;
$(1,1)\!\to\!(0,1),(1,2)$. \emph{Step~B} (three slides): $(0,1)\!\to\!(2,3)$,
$(0,0)\!\to\!(2,2)$, $(1,0)\!\to\!(3,2)$. \emph{Right:} the final full-or-$R$ position
---column~$2$ full, columns $\{0,1,3\}$ equal to $R'=\{2\}$--- is exactly the
occupied\,$\leftrightarrow$\,empty complement of Figure~\ref{fig:tg143}'s rectangle
(shaded cells).}
\label{fig:tg82}
\end{figure}

\begin{definition}\label{def:hash}
For $N$-bit integers $U,W\in\{0,\dots,2^N-1\}$ with $A:=2^N-1$, define
\[
U\,\#\,W \;:=\; A\,U + W\,(A-U) \;=\; A(U+W)-UW \;=\; A^2-(A-U)(A-W).
\]
\end{definition}

\begin{proposition}[bit-serial form]\label{prop:hash}
$U\#W$ is computed by scanning the $N$ bit-positions of $U$ and emitting, at position $i$,
the block $W$ if bit $i$ is $0$ and the all-ones block $A$ if it is $1$, then summing the
shifted blocks: $U\#W=\sum_{i=0}^{N-1}c_i2^i$, $c_i\in\{W,A\}$. Equivalently, with a
width-$0$ terminator returning $0$ and $U=2A'+a_0$,
\[
U\,\#\,W \;=\; 2\,(A'\,\#\,W) \;+\; \bigl(a_0=0\,?\,W:\,A\bigr),
\]
run over all $N$ positions.
\end{proposition}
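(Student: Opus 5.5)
The plan is to prove the bit-serial formula by expanding $U$ in binary and using linearity; the recursive form then follows by induction on the number of scanned positions. Write $U=\sum_{i=0}^{N-1}u_i2^i$ with $u_i\in\{0,1\}$. The block emitted at position $i$ is
\[
c_i \;=\; u_i\,A+(1-u_i)\,W,
\]
which equals $A$ when $u_i=1$ and $W$ when $u_i=0$, as in the statement. Summing the shifted blocks gives
\[
\sum_{i=0}^{N-1}c_i2^i \;=\; A\sum_i u_i2^i \;+\; W\sum_i(1-u_i)2^i \;=\; A\,U+W\Bigl(\sum_{i=0}^{N-1}2^i-U\Bigr).
\]
The key point is that $\sum_{i<N}2^i=2^N-1=A$, so the last factor is $A-U$. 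This yields exactly $AU+W(A-U)=U\#W$ from Definition~\ref{def:hash}. The other two closed forms in Definition~\ref{def:hash} are routine algebraic rearrangements. Also note that $A-U$ is the bitwise complement of $U$ on $N$ bits, which explains the ``$W$ where the bit is $0$'' reading.

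For the recursive form, I would define the width-$k$ scan $T_k(U,W):=\sum_{i=0}^{k-1}c_i2^i$ for $U<2^k$, with $T_0:=0$ (the width-$0$ terminator). By the computation above, $T_N(U,W)=U\#W$, and more generally $T_k(U,W)=A_kU+W(A_k-U)$ with $A_k=2^k-1$. Splitting off the lowest position, write $U=2A'+a_0$ with $A'<2^{k-1}$. The bits of $A'$ are $u_1,\dots,u_{k-1}$ shifted down by one, so
\[
T_k(U,W)=c_0+\sum_{i=1}^{k-1}c_i2^i=c_0+2\,T_{k-1}(A',W).
\]
Here $c_0=W$ if $a_0=0$ and $c_0=A$ if $a_0=1$. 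Unwinding this identity $N$ times down to $T_0=0$ is the statement ``run over all $N$ positions'', so the claim follows by induction on $k$.

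The main obstacle is interpretive rather than computational. The inner $A'\#W$ must be read as the scan over the remaining $N-1$ positions, i.e.\ $T_{N-1}$, and not as the width-$N$ closed form with the same $A$. With a fixed $A$ the identity fails: $2(AA'+W(A-A'))+c_0$ differs from $AU+W(A-U)$ by $(A-W)\cdot$const in general. Note, however, that the blocks $c_i\in\{W,A\}$ themselves always keep the full width-$N$ value $A$; only the number of positions shrinks. I would therefore state the recursion for $T_k$ explicitly, with $A$ fixed in the blocks and only the position count decreasing, and check it directly as above. This is the only point where care is needed.
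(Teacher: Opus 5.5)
Your argument is correct and is essentially the paper's: both split $\sum_i c_i2^i$ by bit value and use $\sum_{i<N}2^i=A$, and your width-$k$ scan $T_k(U,W)=c_0+2\,T_{k-1}(A',W)$ with $T_0=0$ just makes explicit what the paper compresses into ``the terminator must be $0$ and all $N$ positions are scanned.'' Two asides are wrong, though neither is used: with full-width blocks the closed form is $T_k(U,W)=AU+W(A_k-U)$ (not $A_kU+W(A_k-U)$), and reading $A'\#W$ as the fixed-$A$ closed form overshoots by exactly $2^N W$ (matching the paper's ``spurious $2^NW$''), not by a multiple of $A-W$.
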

\begin{proof}
Since $\sum_{i:\,u_i=1}2^i=U$ and $\sum_{i:\,u_i=0}2^i=A-U$, we get
$\sum_i c_i 2^i = A\,U+W(A-U)$, which is Definition~\ref{def:hash}. The terminator must be
$0$: run at full width, an all-zeros $U$ then emits $W$ at every position and gives
$\sum_{i<N}W2^i=A\,W$; a $W$-terminator would add a spurious $2^N W$, and would also drop
the leading-zero positions of $U$ (which the game needs, as $U=M_F$ may have them).
\end{proof}

\begin{corollary}\label{cor:hash}
With $p=U/A,\ q=W/A\in[0,1]$, $\;(U\#W)/A^2 = p+q-pq = 1-(1-p)(1-q)$. Hence $\#$ is
commutative ($U\#W=W\#U$); it is the De~Morgan dual of multiplication under the complement
$\overline{X}=A-X$; scaled by $A^2$ it is the probabilistic OR (a $t$-conorm); it is
monotone nondecreasing in each argument ($\partial(U\#W)/\partial U=A-W\ge0$); $A$ is
absorbing ($X\#A=A^2$); and $0\#W=A\,W$.
\end{corollary}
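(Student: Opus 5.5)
The plan is to derive every claim of Corollary~\ref{cor:hash} from the closed form in Definition~\ref{def:hash}, namely $U\#W=A^2-(A-U)(A-W)$. No induction on bits is needed, and Proposition~\ref{prop:hash} serves only to motivate why these properties matter for the game. I will first divide the closed form by $A^2$. Substituting $U=pA$ and $W=qA$ gives $(A-U)(A-W)/A^2=(1-p)(1-q)$, so $(U\#W)/A^2=1-(1-p)(1-q)$. Expanding this yields $p+q-pq$, which is the first displayed identity.

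The structural properties then follow one at a time from the same closed form.

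Commutativity holds because $(A-U)(A-W)$ is symmetric in $U$ and $W$.

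For De~Morgan duality, write $\overline{X}=A-X$. The definition reads $\overline{U}\cdot\overline{W}=A^2-U\#W$. In other words, $U\#W$ is the complement of $\overline{U}\,\overline{W}$, taken relative to $A^2$ (the "top" for products). Under the $A$-scaling this is the statement $\overline{\overline{U}\cdot\overline{W}}$.

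For the probabilistic OR, note that $1-(1-p)(1-q)$ is exactly the probabilistic sum $S(p,q)$ on $[0,1]$. I will check the $t$-conorm axioms directly. Commutativity was shown above. Associativity follows from $1-S=(1-p)(1-q)$, so iterating just multiplies the complements. Monotonicity is treated in the next step. The neutral element is $0$, since $S(p,0)=p$.

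For monotonicity, the expression $A(U+W)-UW$ is affine in $U$ with slope $A-W\ge0$, because $W\le A$. By symmetry the same holds in $W$.

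For the absorbing element, setting $W=A$ kills the product term, so $X\#A=A^2$.

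Finally, setting $U=0$ gives $A^2-A(A-W)=AW$. This is consistent with the terminator discussion in the proof of Proposition~\ref{prop:hash}.

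I expect no real obstacle, since each item is a one-line algebraic check. The only point needing care is stating the duality precisely: the complement on the product side must be taken relative to $A^2$ rather than $A$. I will also note that associativity holds for the real scaled operation, while for integers $U\#W$ may exceed $N$ bits. Rescaling by $A$ is exact only in the rational sense. So I will state the $t$-conorm claim for the normalised values $p,q\in[0,1]$.
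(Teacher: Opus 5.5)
Your proposal is correct and follows essentially the route the paper intends. The paper gives no separate proof of Corollary~\ref{cor:hash}; every item is read directly off the closed form $U\#W=A^2-(A-U)(A-W)$ of Definition~\ref{def:hash}, normalised by $A^2$, exactly as you do. Your extra care goes slightly beyond what the paper spells out and is appropriate: you take the product-side complement relative to $A^2$, and you verify the $t$-conorm axioms, including associativity, only for the normalised values $p,q\in[0,1]$, since $\#$ does not map $N$-bit integers back to $N$-bit integers.
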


\begin{theorem}[the variant factors the complement]\label{thm:hashfact}
The full-or-$R$ final positions of weight $W'$ are exactly those with $W'=U\,\#\,V_R$,
equivalently
\[
(A-U)(A-V_R)\;=\;A^2-W'.
\]
Thus a final position exists iff $A^2-W'=(2^N-1)^2-W'$ factors into two $N$-bit factors
$\overline U,\overline{V_R}$. Consequently: \emph{(i)} computing $W'=U\#W$ is two
multiplications; \emph{(ii)} given $W'$ and one operand $U\neq A$, the other is a single
division $W=(W'-AU)/(A-U)$; \emph{(iii)} recovering both operands from $W'$ alone is
splitting the $2N$-bit number $A^2-W'$ --- the balanced-factoring wall of
Section~\ref{sec:hard}, now applied to the board complement rather than to $W'$.
\end{theorem}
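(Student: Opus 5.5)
The plan is to mirror the proof of Theorem~\ref{thm:fact}: compute the scaled weight of an arbitrary full-or-$R$ final position, and then reduce every consequence to the algebraic identities of Definition~\ref{def:hash} and Corollary~\ref{cor:hash}.

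Fix such a position. Let $F\subseteq[N]$ be its full columns and $S$ the columns carrying the common pattern $R$.

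\emph{Step 1: the weight.} Column $c$ with pattern $T$ contributes $2^{-c}\sum_{r\in T}2^r$ to $W$. After scaling by $2^{N-1}$, this becomes $2^{N-1-c}$ times the value of $T$. A full column therefore contributes $A\,2^{N-1-c}$, and an $R$-column contributes $V_R\,2^{N-1-c}$. Summing with the reflected selectors $U=\sum_{c\in F}2^{N-1-c}$ and $U_S=\sum_{c\in S}2^{N-1-c}$ gives
\[
W'=A\,U+V_R\,U_S .
\]

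\emph{Step 2: matching the paper's form.} The paper writes $W'=AU+V_R(A-U)$, which presumes that every non-full column carries $R$, i.e.\ $U_S=A-U$. An empty column is no obstacle: it can be viewed as carrying the pattern $R=\emptyset$ only when all non-full columns are empty. In general, however, empty columns break $U_S=A-U$. I will handle this by reading the definition as ``each column is full or equals $R$,'' so that empty columns are allowed only via $R=\emptyset$. This is the convention of Figure~\ref{fig:tg82}, where every column is full or equals $R'$. With $S=[N]\setminus F$ we get $U_S=A-U$, hence $W'=U\#V_R$ by Definition~\ref{def:hash}. Its third form gives
\[
A^2-W'=(A-U)(A-V_R).
\]
Conversely, suppose $W'=U\#V$ for some $N$-bit $U,V$. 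Take $F$ to be the columns whose reflected bit in $U$ is $1$, and let $R=\mathrm{bits}(V)$. The resulting position is final, and by Step~1 it has weight $W'$. This gives the ``exactly'' claim.

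\emph{Step 3: the existence criterion.} A final position of weight $W'$ exists iff $A^2-W'=\overline U\cdot\overline{V_R}$ with $\overline U=A-U$ and $\overline{V_R}=A-V_R$, both in $[0,A]$, i.e.\ two $N$-bit factors. The map $X\mapsto A-X$ is a bijection of $[0,A]$, so any such factorisation pulls back to a valid pair $(U,V_R)$. I will treat ``exists'' at the level of weight and final shape, as Theorem~\ref{thm:fact} does; reachability from a start position needs the separate Step~A feasibility argument and is not claimed here.

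\emph{Step 4: the consequences.} Item (i) follows from $A(U+W)-UW$. For item (ii), solve $W'=AU+W(A-U)$ for $W$, which is valid since $U\ne A$. For item (iii), the two operands are determined by the complemented factor pair of $A^2-W'$, so recovering them is exactly splitting that number. It has at most $2N$ bits because $A^2-W'\le A^2<2^{2N}$.

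The main obstacle is Step~2's bookkeeping of empty columns and the boundary cases $U=0$ and $U=A$, where $0\#W=AW$ and $A$ is absorbing. I would pin these down explicitly before finishing.
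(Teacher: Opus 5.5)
Your proposal is correct and takes essentially the paper's route: the paper gets $W'=A\,U+V_R(A-U)$ by putting $R$ on every column and then completing the full columns, which is your Step~1 column count under the reading that every column is full or equals $R$; it then reads off $A^2-W'=(A-U)(A-V_R)$ and items (i)--(iii) directly from the three forms in Definition~\ref{def:hash}. Your explicit handling of empty columns, the converse construction, and the bijection $X\mapsto A-X$ just spell out what the paper leaves implicit, and the boundary cases $U=0$ and $U=A$ you flag need no separate treatment because the identity holds for them too.
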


\begin{remark}\label{rem:menu}
So the ``empty-or-$R$'' game (Section~\ref{sec:fact}) factors $W'$, while the
``full-or-$R$'' game factors $A^2-W'$: the two winning conditions are exchanged by
complementing the board (occupied $\leftrightarrow$ empty), which sends
$W'\mapsto A^2-W'$; Steps~A and~B are unchanged, only the target number differs.
Reducing $A^2-W'\equiv -W'\pmod{A}$ loses everything, since reduction mod $2^N-1$ is a
homomorphism. More generally, allowing a fixed menu of $k$ column patterns
$\{R_1,\dots,R_k\}$ makes $W'=\sum_i V_{R_i}M_i$, a sum of $k$ products; $k=1$ is
Section~\ref{sec:fact}, and $k=2$ with one pattern full is the operation~$\#$.
\end{remark}

\medskip
Concretely, run the complementary game on the instance of Section~\ref{sec:example}. The
full $4\times4$ board has weight $A^2=(2^4-1)^2=15^2=225$, so complementing $W'=143$ gives
$A^2-W'=225-143=82$ (the reference here is the full-board weight $15^2=225$, not
$2^{2N}-1$). Laying $82$ and playing the full-or-$R$ game reaches the board-complement of
the rectangle of Figure~\ref{fig:tg143}: a ``plus'' with column~$2$ full and columns
$\{0,1,3\}$ carrying the lone pattern $R'=\{2\}$ (Figure~\ref{fig:tg82}). Its selectors are
$M_F=2^{\,3-2}=2$ and $V_{R'}=2^2=4$, and indeed
$(A-M_F)(A-V_{R'})=(15-2)(15-4)=13\cdot 11=A^2-82=143$ --- the same $11\cdot 13$, read
through the complement.

\paragraph{The geometry is easy; the target is hard.}
The full-or-$R$ variant exposes a clean decoupling in the game's complexity. Complementing the
winning condition radically alters the number-theoretic target---from $W'$ to $A^2-W'$---while
leaving the geometric mechanics (Steps~A and~B) completely unchanged. The physical constraints
of the board are therefore computationally easy: whatever the target number, duplications and
slides reach it in polynomial time \emph{once it is known}. The intractable core of the token
game is purely the algebraic search for that target.

More broadly, the operation $\#$ recasts the board as a visual calculator for polynomial
arithmetic. One column pattern makes $W'$ a single product $V\cdot M$ (Section~\ref{sec:fact});
$k$ patterns make it a sum of $k$ products (Remark~\ref{rem:menu}); the full-or-$R$ condition
realises the De~Morgan dual $A^2-(A-U)(A-W)$. In every case the board's rules are
\emph{evaluating a polynomial}, and the ``gameplay''---driving tokens to a final
position---is the search for its factorisation: for the roots that split the target over the
promised $\{0,1\}$ alphabet.

\end{document}